\newtheorem{thm}{Theorem}[section]
\newtheorem{lem}[thm]{Lemma}
\newtheorem{prop}[thm]{Proposition}
\newtheorem{rema}[thm]{Remark}
\newenvironment{proof}[1][]%
 {\ \\ {\bf Proof #1. }}%
 {\hfill\mbox{\rule{2 true mm}{3 true mm}}}%\vskip 2 ex\noindent}
\numberwithin{equation}{section}
\def\theequation{\thesection.\arabic{equation}}
\def\be{\begin{eqnarray}}
\def\ee{\end{eqnarray}}
\def\numero{\refstepcounter{equation} (\theequation)}
\let\text=\textstyle
\newcommand{\N}{{\mathbb N}}
\newcommand{\R}{{\mathbb R}}
\newcommand{\loi}{{\cal L}}
\newcommand{\ds}{\displaystyle}
\newcommand{\intot}{\displaystyle \int _0^t }
\newcommand{\E}{{E}}
\newcommand{\intrd}{\ds{\int_{\rit^d}}}
\newcommand{\rit}{\mathbb{R}}
\newcommand{\dit}{\mathbb{D}}
\def\be{\begin{eqnarray}}
\def\ee{\end{eqnarray}}
\def\ben{\begin{eqnarray*}}
\def\een{\end{eqnarray*}}
\def\bei{\begin{itemize}}
\def\eei{\end{itemize}}
\def\me{\medskip \noindent}
\def\bi{\bigskip \noindent}
\title{\bf L\'evy Flights in Evolutionary Ecology}
\author{Benjamin Jourdain$^{1}$, Sylvie
M\'el\'eard$^{2}$, Wojbor A. Woyczynski$^{3}$}
\date{\today}
\begin{document}

\maketitle

\begin{center}
\makeatletter\renewcommand{\@makefnmark}{\mbox{$^{\@thefnmark}$}}\makeatother
\footnotetext[2]{Universit\'e Paris Est, CERMICS, 6 et 8 avenue Blaise Pascal, Cit\'e Descartes, Champs-sur-Marne, 77455 Marne-la-Vall\'ee Cedex 2, France, email:
jourdain@cermics.enpc.fr}
\footnotetext[2]{CMAP UMR 7641, Ecole Polytechnique, Route de Saclay, France, email:
sylvie.meleard@polytechnique.edu}
\footnotetext[3]{Department of Statistics, and Center for Stochastic and Chaotic Processes in Science and Technology, Case Western Reserve University, Cleveland, OH 44122, U.S.A., email: waw@case.edu}
\makeatletter\renewcommand{\@makefnmark}{}\makeatother
\makeatletter\renewcommand{\@makefnmark}{\mbox{$^{\@thefnmark}$}}\makeatother
\end{center}

% \enlargethispage*{1000pt}

\begin{abstract}
We are interested in modeling Darwinian evolution 
resulting from the interplay of phenotypic variation
and natural selection through ecological interactions.
The
population is modeled as a stochastic point process whose generator
captures the probabilistic dynamics over continuous time of birth,
mutation, and death, as influenced by each individual's trait
values, and interactions between individuals. An offspring usually
inherits the trait values of her progenitor, except when a random 
mutation causes the offspring to take an instantaneous
mutation step at birth to new trait values.  In the case we are interested in, the probability distribution of mutations has a  heavy tail and belongs to the  domain of  attraction of a stable law.  We investigate the large-population limit
with allometric demographies: larger populations made up of smaller individuals which reproduce and die faster, as  is typical for  micro-organisms.  We show that depending on the allometry coefficient the limit behavior of the population process can be approximated by nonlinear L\'evy flights  of different nature: either deterministic, in the form of nonlocal fractional reaction-diffusion equations, or  stochastic, as nonlinear  super-processes with the  underlying reaction and  a fractional diffusion operator. These approximation results demonstrate  the existence of such nontrivial fractional objects; their uniqueness is also proved. 
\end{abstract}

\bigskip

\emph{Key-words:} Darwinian evolution, mutation law with heavy tail, 
birth-death-mutation-competition point process, mutation-selection
dynamics,  nonlinear  fractional reaction-diffusion equations, nonlinear superprocesses with  fractional diffusion.

% \pagebreak

%%%%%%%%%%%%%%%%%%%%%%%%%%%%%%%%%%%%%%%%%%%%%%%%%%%%%%%%%%%%%

\section{Introduction}
\label{sec:intro}

In this paper, we are interested in modeling  the dynamics of
populations as driven by the interplay of phenotypic
variation and natural selection operating through ecological
interactions, i.e.,  Darwinian evolution.   The population is modeled as a stochastic Markov
point process whose generator captures the probabilistic dynamics
over continuous time of birth, mutation and death, as influenced
by each individual's trait values and interactions between
individuals. The adaptive nature of a trait implies that an
offspring usually inherits the trait values of her progenitor,
except when a mutation occurs. In this case, the offspring makes
an instantaneous mutation step at birth to new trait values.
This microscopic point of view has been heuristically
introduced in Bolker-Pacala~\cite{BP97} and
Dieckmann-Law~\cite{DL00}. It has been  rigorously developed first
in Fournier-M\'el\'eard \cite{FM04} for spatial seed models and by
Champagnat-Ferri\`ere-M\'el\'eard \cite{CFM06}, \cite{CFM08} for
phenotypic trait structured populations when the mutation kernel behaves essentially as a Gaussian law.
The  aim in this work is to study the case  where a mutant
individual can be significantely different from his ancestor.
More precisely, the mutation kernel will be assumed to have  a 
heavy tail and to belong to the domain of attraction  of a
stable law.
If the traits describe a spatial dispersion, for instance for seeds,  we thus assume
  that seed offsprings instantaneously  jump far from the mother seed because of
  availability of resources or wind. In the case of  phenotypic traits, the heavy tail mutation assumption says  that if a mutant offspring is too close to the mother trait, then it is so deleterious that it cannot be
  observed. A trait which quantifies the aggressivity level is an example of such a situation.

\bigskip \noindent  In the context of ecology several authors have considered the fractional reaction-diffusion model and we would like to acknowledge some of their contributions here. In particular,  Baeumer, Kovacs, and Meerschaert \cite{BKM07} considered fractional repro\-duction-dispersal equations and heavy tail dispersal kernels. Their paper also contains an exhaustive review of the literature on the population spreads with extreme patterns of dispersal and reproduction.  Another population dynamics model introduced by Gurney and Nisbet \cite{GN75} led to strongly nonlinear partial differential equation of the porous media type; we have studied a   probabilistic fractional framework for  related stochastic and partial differential  equations in \cite{JMW08}.

\bigskip \noindent Each individual is characterized by a real-valued  trait $x$ describing   a phenotypic,  or a spatial parameter. The birth and death rates of this individual  depend on its trait $x$ and also on the environment through the influence of the others individuals alive. In case the offspring
produced by  the individual with trait $x$ carries a mutated trait, then the difference between this mutated trait and $x$ is distributed according to $M(x,dh)$ 
such that  
  \be 
  \label{mutation}
  \int_{|h|\geq y}  M(x,dh) \sim_{y\to \infty} {C(x)\over
  y^\alpha},
  \ee
  with $\alpha\in (0,2)$ not depending on $x$. The inclusion of the dependence of the mutation rate on the trait itself, as expressed in the dependence of $M$, and $C$, on $x$, is an essential part of our model even though it considerably complicates the mathematical framework of this paper.

\bigskip \noindent
 An example for $M(x,dh)$ could be  the Pareto law with density
  ${\alpha\over 2} {\bf 1}_{\{|h|\geq 1\}} {/
  |h|^{1+\alpha}}$, but
 a mutation law  equal to the
  Pareto law (up to a constant) outside a given interval of the form $[-a, +a]$,
   and constant inside, is also   covered by our model. The latter example corresponds to a  
   distribution of mutant traits which is uniform in a small neighborhood around the mother's trait, but 
   decreases, with heavy tail  for more distant traits.

\bigskip \noindent
We investigate  the large population limits, $K\to \infty, $ 
with allometric demographies: larger populations made up of smaller individuals which reproduce and die faster. This leads to systems in which organisms have short lives and reproduce fast while their colonies or populations grow or decline on a slow time scale. Typically, these assumptions are appropriate for microorganisms such as bacteria or plankton.  The allometric effect will be modeled by a dominant birth and death rate of order $K^\eta$, $\eta>0$. 

\bigskip\noindent
 In \cite{CFM08}, such asymptotic approximations have been studied in the case of small mutations with a Gaussian, thin tail  distribution. In such a situation, for  the allometric exponent $\eta<1$, and large enough   population,    the individual population stochastic process
is approximated by the solution of a classical deterministic  
nonlinear reaction-diffusion equation.  In our present heavy tail setting  we will prove that in a similar limit, the
population process is approximated by the solution of a
nonlocal nonlinear partial differential equation driven by a
fractional Laplacian and involving a reaction term. 
Note that, as a byproduct,  this result provides a proof  of  the  existence of weak solutions of such nonclassical equations. Separately, employing purely  analytical techniques, we will also 
prove the uniqueness of their solutions.  The case of the allometric exponent $\eta=1$ is significantly different as stochasticity remains present in the limit due to the demographic acceleration. The population process is then approximated by a nonlinear super-process with underlying reaction and fractional diffusion. 

\bigskip\noindent
 Our work  provides a rigorous derivation of macroscopic models  from microscopic dynamics (hydrodynamic limit) for  a large class of nonlinear equations involving    L\'evy flights which naturally appear in
 evolutionary ecology and population dynamics. Our use of probabilistic tools of interacting particle systems is essential  and provides the information  about  the  scales (between mutation amplitude and population size) at which 
such models are justified.

 \bigskip\noindent
 It should be mentioned at this point that fractional reaction-diffusion equations have been suggested and studied as models for several physical phenomena. Thus, Del-Castillo-Negrete, Carreras, and Lynch \cite{DCL03} investigated front dynamics in reaction-diffusion systems with L\'evy flights, and Henry, Langlands and Wearne \cite{HLW05}, studied Turing pattern
formation in fractional activator-inhibitor systems. Also, Saxena, Mathai, and Haubold \cite{SMH06}  found explicit solutions of the fractional reaction-diffusions equations in terms of the Mittag-Leffler functions which are suitable for numerical computations; they also considered the situation where the time derivative is also replaced by a fractional derivative of order less than one. Another numerical method for finding solutions of such equations can be found in Baeumer, Kovacs, and Meerschaert \cite{BKM08}.

\bigskip \noindent
This paper starts (Section 2) with the 
description of  the reproduction and death mechanisms for the individuals of the  population we are interested in. The main convergence results based on a large population
limit are stated. Thus Theorem~\ref{readif} shows that an allometric effect of order $K^\eta$, with $\eta\in (0,1)$,  leads to a deterministic, nonlinear
integro-dif\-fe\-ren\-tial equation driven by a nonlocal fractional Laplacian operator  independent of $\eta$, while  in
Theorem~\ref{readifstoch}, we show that an allometric effect of order $K$ ($\eta=1$) yields,  in the large population limit, a stochastic measure-valued
process depending on the acceleration rate of the birth-and-death
process.  Thus (demographic) stochasticity  appears as the allometric exponent takes on the value $\eta=1$.  Also, 
Proposition \ref {restlapfrac} establishes 
that  the mutation kernel conveniently renormalized behaves approximately as a jump kernel with the heavy tail jump measure  ${\sigma(x)dz/|z|^{1+\alpha}}$.  
Section 4 contains technical lemmas needed in the proof of the main results of Section 3.  In particular we clarify a key technical  point that was eluded in \cite{FM04}, \cite{CFM08}, allowing to deduce the tightness of the measure valued population process for the weak topology from the tightness for the vague topology (see Lemma \ref{controle-masse} and  Remark \ref{rem-contmass} and Step 2 of the proof of Theorem \ref{readif}). Section 5 contains proofs of the main theorems stated in Section 3  using the  measure-valued martingale properties of the
population  process.    %%%%%%%%%%%%%%%%%%%%%%%%%%%%%%%%%%%%%%%%%%%%%%%%%%%%%%%%%%%%%%

\section{Population point process}
\label{sec:ppp}

As in \cite{CFM08}, the evolving population is modelled by a stochastic system of interacting
individuals, where each individual is characterized by a
phenotypic trait. This trait is described quantitatively by a real number.
We assume that the parameter $K$ scales the initial number of
individuals. To observe  a nontrivial limit behavior of the system  as $K$ grows to
infinity it is necessary to attach to  each individual  the weight ${1\over
K}$. Thus our system evolves in the subset ${\cal M}_K$ of  the set $M_F$ of finite non-negative measures on
$\mathbb{R}$    consisting
of all finite point measures with weight ${1\over K}$:
\begin{equation*}
{\cal M}_K = \left\{ {1\over K}\sum_{i=1}^n \delta_{x_i} , \; n \geq 0,
  x_1,...,x_n \in \mathbb{R} \right\}.
\end{equation*}
Here and below, $\delta_x$ denotes the Dirac mass at $x$. For any
$m\in M_F$, any measurable function $f$ on $\mathbb{R}$, we set
$\left< m, f \right> = \int_{\mathbb{R}} f dm$.
Following \cite{CFM06}, we describe the population by
\begin{equation}
\label{pop}
\nu^K_t = {1\over K} \sum_{i=1}^{I^K_t} \delta_{X^i_t},
\end{equation}
with $I^K_t \in {\mathbb{N}}$ denoting  the number of individuals
alive at time $t$, and $X^1_t,...,X^{I^K_t}_t$ describing the
individuals' traits (in $\mathbb{R}$).

\me
The population measure-valued process $\nu^K$ evolves as a birth
and death process with mutation and selection. More precisely, an
individual can give birth or die.  The death can be natural, or can be due to
the competition pressure exerted by other individuals (for instance, 
by sharing food).
At birth, the offspring can inherit the   trait of its parent,  or can
mutate to another trait with some positive probability.  We assume that
the population is of order $K$ and that for each individual, 
  birth and death occur at the rate of order  $K^{\eta}$,
for some $0<\eta \leq 1$,
while preserving the demographic balance. More precisely, the main assumptions on
the birth and death dynamics are summarized below.

\begin{itemize}
\item {\bf Scaling Assumptions:} For a population $\nu={1\over K}\sum_{i=1}^{I}\delta_{x^i}$,
and a trait $x\in \mathbb{R}$,
  the birth and   death rates are scaled with the system's size according
to the following rules:
 \be
b_K(x,\nu)&=&K^{\eta}r(x)+b(x, {1\over K} \sum_{i=1}^{I}V(x-x^i))
= K^{\eta}r(x)+b(x, V*  \nu(x)),\nonumber\\
d_K(x,\nu)&=&K^{\eta}r(x)+ d(x, {1\over K}\sum_{i=1}^{I}U(x-x^i))
= K^{\eta}r(x)+d(x, U*\nu(x)),\nonumber\\\label{taux}    
   \ee
where $b$ and $d$ are continuous functions on $\mathbb{R}^2$, and $*$ denotes the convolution operation.
The allometric effect (smaller individuals reproduce and die faster) is parametrized by  the exponent $\eta$ and  a trait-dependent function $r$  which is assumed to be positive and bounded on $\mathbb{R}$.

  \item{\bf Assumptions (H1):}     The interaction kernels $V$, 
  and $U$, affecting,  respectively,  
  reproduction and mortality rates, are continuous functions on $\mathbb{R}$. In addition, 
  there exist constants $\bar{r}$, $\bar{b}$,
$\bar{d}$, $\bar{U}$, and $\bar{V}$,  such that, 
for $x,z\in
\mathbb{R}$,
\be \label{boundedness} 0&\leq& r(x)\leq \bar{r}\ ;\ 
0\leq b(x,z)\leq \bar{b},\\
\label{linearity}  0&\leq&d(x,z)\leq
\bar{d}\, (1+|z|), \\
0&\leq&U(x)\leq \bar{U}\ ;\ 
0\leq  V(x)\leq \bar{V}.
\ee
Note that the death rate $d$ is not assumed to be bounded but its growth in variable   $z$ is at most linear. This dependence models a possible competition between individuals (e.g.,  for shared resources), increasing their death rate. 
  Assumptions (H1) ensure that there exists a constant
$\bar{C}>0$, such that the total event rate for a population
counting measure $\nu=\frac{1}{K}\sum_{i=1}^{I}\delta_{x^i}$, obtained as the
sum of all event rates, is bounded by $\ \bar{C}I(1+I)\ $, where $\, I$ is the population size.

\item  {\bf Assumption (H2):}  When an individual with trait $x$ gives birth, it can
produce a mutant offspring with  probability $p(x)$. Otherwise,
with probability $1-p(x)$, the
offspring carries the same trait $x$ as its ancestor.
If a mutation occurs, the mutated offspring instantly acquires a
new trait $x+h$, where $h$ is selected randomly according to the mutation step
measure $M_K(x,dh)$.
%We  assume that there is a probability measure $\bar{M}_{K}$ on the real line such that, for each $x\in\rit$, $M_K(x,dh)$ is absolutely continuous with respect to $\bar{M}_{K}$. 
 We only consider  mutations which have heavy tail distributions. More precisely, we assume that 
the probability measure $M_K(x,dh)$ is
the law of the random variable
 \ben {X(x)\over K^{\eta\over
\alpha}},
\een
 where $\ X(x)$ is a symmetric random variable such
that, for some $\alpha\in (0,2)$, and a bounded $\sigma:\rit\to\rit_+$, 
 \be
  \label{heavytail} \lim_{u\to+\infty}\sup_{x\in\rit}\left|u^\alpha\mathbb{P}(|X(x)|\geq u)-\frac{2\sigma(x)}{\alpha}\right|=0. 
\ee

\end{itemize}

\bigskip \noindent
As an immediate corollary of Assumption (H2) we have 
$$\int_{|h|\geq y}  M_{K}(x,  dh) \sim_{y K^{\frac{\eta}{\alpha}}\to \infty} {2\, \sigma(x)\over
\alpha\,K^{\eta}\,  y^\alpha},$$
and, for  $\alpha\in (1,2)$,   also  
$$\int_{\rit} h M_K(x,dh)= 0.$$

\bigskip

{\it Example 2.1.}  One can choose the random variable  $X(x)$ with
Pareto's law independent  of  $x$, that is, with the density 
\ben
{\alpha\over 2} {\bf 1}_{\{|h|\geq 1\}} {1\over
  |h|^{1+\alpha}}.
  \een
  
  \bigskip
  {\it Example 2.2.} 
Another possibility is to take $X(x)$ 
  with the 
  Pareto law (up to multiplicative a constant) outside a given interval of the form $[-a, +a]$, 
   and constant inside. This choice    corresponds to a  
   distribution of mutant traits which is uniform in a small neighborhood around the mother's trait, but 
   decreases, with heavy tail.  for more distant traits.

% We assume that there exists  a constant $C$
%   and a probability density function $\bar{M}_K$ on $\rit$  such
%   that for all $x, h\in \mathbb{R}$,
%   \be
%   \label{mutation}
%   M_K(x,h)\leq C\bar{M}_K(h).
% \ee

  \bi
  We refer to Fournier-M\'el\'eard \cite{FM04} or Champagnat-Ferri\`ere-M\'el\'eard \cite{CFM06}
 for a pathwise construction of   a point measure-valued Markov process $(\nu^K_t)_{t\geq 0}$
satisfying  Assumptions (H1). 
The  infinitesimal generator of its  Markovian dynamics  
is given, for each finite point measure $\nu$,  by the expression
\begin{align} \label{generator}
%   \label{eq:def-LK}
L^K\phi(\nu)&=K\int_{\rit} (1-p(x))\ (K^{\eta}
r(x)+b(x,V*\nu(x)))(\phi(\nu+{1\over
  K}\delta_x)-\phi(\nu)) \nu(dx) \nonumber
\\ & \hskip -1cm +K\int_{\rit} p(x)\ (K^{\eta}
r(x)+b(x,V*\nu(x)))\int_{\rit}(\phi(\nu+{1\over
  K}\delta_{x+h})-\phi(\nu))M_K(x,dh) \nu(dx) \nonumber
\\ & +K\int_{\rit}
(K^{\eta} r(x)+d(x,U*\nu(x)))(\phi(\nu-{1\over
  K}\delta_x)-\phi(\nu)) \nu(dx).
\end{align}
 The first term of~(\ref{generator}) captures the effect  of births without mutation, the second term that of
births with mutation, and the last term that of deaths.

\bigskip\noindent
 If
$\E(\langle\nu^K_0,\mathbf{1}\rangle^2)<+\infty$, then 
for any
  $T<\infty$,   $ \E (\sup_{t\in[0,T]} \left< \nu^K_t,1\right>^2 ) <\infty$ (see Lemma \ref{mep}). 
Thus, for any measurable functions $\phi$ on $M_F$ such that
$|\phi(\nu)|+|L^K\phi(\nu)|\leq C(1+\langle\nu,1\rangle^2)$, the
process
\begin{equation}
\label{eq:mart-gal}
\phi(\nu^K_t)-\phi(\nu^K_0)-\int_0^tL^K\phi(\nu^K_s)ds
\end{equation}
is a martingale. In particular, in view of (\ref{generator}), for each measurable bounded
function $f$,  
\begin{align}
& M^{K,f}_t= \langle \nu^K_t,f\rangle -
\langle \nu^K_0,f\rangle \notag \\ &
-\int_0^t\!\!\int_{\rit}(b(x,V* \nu^K_s(x))-d(x,U*\nu^K_s(x)))f(x)
\nu^K_s(dx)ds \label{mart1} \\ & -\int_0^t\!\!\int_{\rit}
p(x)\ (K^{\eta}r(x)+b(x,V*\nu^K_s(x)))
\bigg(\!\int_{\rit}f(x+h)M_K(x,dh)-f(x)\bigg)\nu^K_s(dx)ds, \notag
\end{align}
is a square integrable martingale with quadratic variation
\begin{multline}
\label{quadra1}
\langle M^{K,f}\rangle_t={1\over K}\bigg\{
\int_0^t\!\!\int_{\rit}(2K^{\eta}r(x)+ b(x,V*\nu^K_s(x))+d(x,U*
\nu^K_s(x)))f^2(x)\nu^K_s(dx)ds \\
+\int_0^t\!\!\int_{\rit} p(x)\ (K^{\eta}r(x)+b(x,V*\nu^K_s(x)))
\bigg(\int_{\rit}f^2(x+h)M_K(x,dh)- f^2(x)\bigg)\nu^K_s(dx)ds
\bigg\}.
\end{multline}

\section{Scaling limits of population point processes}

\bi Our aim is now to make   the population size's scaling $K$ tend to infinity, in a scale accelerating the births and deaths, and making
the mutation steps smaller and smaller.
We begin by explaining  why a fractional Laplacian term appears   in the limit,  $K\to\infty$, of the last term of the r.h.s. of \eqref{mart1}.

\begin{prop}\label{restlapfrac}
 Under {\bf (H2)},
 
 (i)   if $\alpha\in (1,2)$, then 
for any $f\in C^2_b(\rit)$, $K^\eta\int_\rit (f(x+h)-f(x))M_K(x,dh)\ $ converges to $\ \sigma(x)\int_{\rit}(f(x+z)-f(x)-f'(x)z{\bf 1}_{\{|z|\leq 1\}})\frac{dz}{|z|^{1+\alpha}}$ uniformly for $x\in\rit$ as $K\to\infty$.

 (ii) if $\alpha\in (0,1]$, then  the same result holds for  $f\in C^2_b(\rit)$ compactly supported. 
\end{prop}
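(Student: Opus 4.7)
The plan is to treat $\lambda_K(x,dh):=K^\eta M_K(x,dh)$ as a rescaled positive measure on $\rit$ and to show it converges, uniformly in $x$, to $\sigma(x)\,dz/|z|^{1+\alpha}$ in the pairing with the relevant test functions. The key input is obtained by substituting $u=yK^{\eta/\alpha}$ in (H2) and using the symmetry of $X(x)$, which yields
\[
\lim_{K\to\infty}\sup_{x\in\rit}\Bigl|\lambda_K(x,\{|h|>y\})-\frac{2\sigma(x)}{\alpha y^\alpha}\Bigr|=0\quad\text{for every }y>0,
\]
and a layer-cake computation extracts from it the uniform moment bound $K^\eta\int_{|h|\leq\delta}h^2\,M_K(x,dh)\leq C\delta^{2-\alpha}$ (the powers of $K$ cancel exactly).

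For part (i), the symmetry $M_K(x,-dh)=M_K(x,dh)$ gives $\int_{|h|\leq 1}h\,M_K(x,dh)=0$, so one may freely insert a drift cutoff and write
\[
K^\eta\!\int_\rit (f(x+h)-f(x))\,M_K(x,dh)=\int_\rit \varphi_x(h)\,\lambda_K(x,dh),\ \varphi_x(h):=f(x+h)-f(x)-f'(x)h\mathbf{1}_{|h|\leq 1}.
\]
Split at $|h|=\delta$ and $|h|=M$ with $0<\delta<1<M$. The piece $|h|\leq\delta$ is $O(\delta^{2-\alpha})$ uniformly in $x$ and $K$ via the moment bound together with $|\varphi_x(h)|\leq\tfrac12\|f''\|_\infty h^2$, and the piece $|h|>M$ is $O(M^{-\alpha})$ uniformly by boundedness of $f$ and the tail estimate. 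For the middle piece, a Lebesgue--Stieltjes integration by parts against the distribution functions $y\mapsto\lambda_K(x,\{h>y\})$ and $y\mapsto\lambda_K(x,\{h<-y\})$, combined with their uniform convergence, yields
\[
\int_{\delta<|h|<M}\!\varphi_x(h)\,\lambda_K(x,dh)\ \xrightarrow[K\to\infty]{}\ \sigma(x)\!\int_{\delta<|z|<M}\!\varphi_x(z)\,\frac{dz}{|z|^{1+\alpha}}
\]
uniformly in $x$; the jump of $\varphi_x$ at $|h|=1$ produces matching boundary contributions on both sides, and the limit measure does not charge $\{|h|=1\}$. The analogous small-$\delta$ and large-$M$ bounds for the right-hand side against $dz/|z|^{1+\alpha}$ are immediate, so letting $K\to\infty$ and then $\delta\to 0$, $M\to\infty$ closes the argument.

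Part (ii) follows by the same scheme. The compact-support hypothesis on $f$ enters only to secure the uniform-in-$x$ large-$M$ estimate (for $M$ larger than the support radius plus $|x|$, the integrand reduces to $-f(x)$, whose $x$-dependence is confined to $\mathrm{supp}(f)$ and interacts well with the uniform tail convergence of $\lambda_K$). For $\alpha=1$ the cutoff $\mathbf{1}_{|h|\leq 1}$ plays exactly the same role as in (i); for $\alpha\in(0,1)$ it becomes cosmetic, since the symmetry of $dz/|z|^{1+\alpha}$ kills the corresponding integral. The principal obstacle throughout is maintaining uniformity in $x$ at every step, and this is supplied precisely by the uniform-in-$x$ form of (H2), which underpins both the second-moment bound and the tail-distribution convergence driving the integration by parts; pointwise-in-$x$ convergence would be too weak for the downstream use in Theorems~\ref{readif} and~\ref{readifstoch}.
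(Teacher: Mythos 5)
Your proof is correct, but it follows a genuinely different route from the paper's. You run the standard ``convergence of L\'evy measures'' scheme: set $\lambda_K(x,dh)=K^\eta M_K(x,dh)$, use symmetry to insert the compensator for free, truncate at $|h|=\delta$ and $|h|=M$, control the inner piece by a uniform second-moment bound, the outer piece by the uniform tail bound, and the middle piece by integration by parts against the tail distribution functions. The paper instead exploits the one-dimensional structure: by Fubini it rewrites $\int(f(x+h)-f(x))M_K(x,dh)$ as $\int_0^\infty\frac{f'(x+z)-f'(x-z)}{2}\,\mathbb{P}(|X(x)|\geq K^{\eta/\alpha}z)\,dz$, identifies the limit via the identity $\int_0^\infty(f'(x+z)-f'(x-z))z^{-\alpha}dz=\alpha\int_{\mathbb{R}}(f(x+z)-f(x)-f'(x)z\mathbf{1}_{\{|z|\leq1\}})|z|^{-1-\alpha}dz$, and then bounds the single difference integral by splitting at the $K$-dependent point $K^{-\eta/2\alpha}$ and at $1$. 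The paper's argument is shorter and avoids any discussion of atoms or jump points; yours is more modular and, notably, proves more: your tail bound for $|h|>M$ uses only $\|f\|_\infty$ and $\lambda_K(x,\{|h|>M\})=O(M^{-\alpha})$, which is valid for every $\alpha\in(0,2)$, so your argument establishes (i) for all $f\in C^2_b$ without the compact-support restriction of (ii) --- that restriction is an artifact of the paper's proof, where the term $\int_1^\infty|f'(x+z)-f'(x-z)|z^{-\alpha}dz$ forces either $\alpha>1$ or compactly supported $f'$. (Your parenthetical explanation of where compact support enters is accordingly unnecessary, and as written it lets $M$ depend on $|x|$, which would break uniformity; just drop it.) Two small points you should make explicit: the layer-cake bound $\int_{|h|\leq\delta}h^2\lambda_K(x,dh)\leq C\delta^{2-\alpha}$ needs the \emph{uniform} bound $\sup_{u>0,x}u^\alpha\mathbb{P}(|X(x)|\geq u)<+\infty$, not merely the fixed-$y$ limit in your display --- this follows from (H2) together with $\mathbb{P}\leq1$ and the boundedness of $\sigma$, exactly as in the paper's estimate after \eqref{contrest}; and the integration by parts in the middle region requires the tail convergence to be uniform over $y\in[\delta,M]$ as well as over $x$, which again follows by taking the supremum over $u\geq\delta K^{\eta/\alpha}$ in (H2).
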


\begin{proof}
 By Fubini's theorem and since $f(x+h)-f(x)=\int_0^{h}f'(x+z)dz$,
$$\int_\rit {\bf 1}_{\{h\geq 0\}}(f(x+h)-f(x))M_K(x,dh)=\int_0^\infty f'(x+z){\mathbb P}(X(x)\geq K^{\frac{\eta}{\alpha}}z)dz.$$
Treating in the same way the integral for $h<0$ and using the symmetry of $X(x)$, one deduces that
$$\int_\rit (f(x+h)-f(x))M_K(x,dh)=\int_0^\infty\frac{f'(x+z)-f'(x-z)}{2}{\mathbb P}(|X(x)|\geq K^{\frac{\eta}{\alpha}}z)dz.$$
By integration by parts,
\begin{align*}
 \int_0^{+\infty}\left(f'(x+z)-f'(x-z)\right)\frac{dz}{z^\alpha}
&=\alpha\int_0^{+\infty} (f(x+z)+f(x-z)-2f(x))\frac{dz}{z^{1+\alpha}}
\\&=\alpha\int_\rit (f(x+z)-f(x)-f'(x)z{\bf 1}_{\{|z|\leq 1\}})\frac{dz}{|z|^{1+\alpha}}.
\end{align*}
Combining both equalities one deduces that
\begin{align*}
 R(x)&\stackrel{\rm def}{=}K^\eta\int_\rit (f(x+h)-f(x))M_K(x,dh)-\sigma(x)\int_{\rit}(f(x+z)-f(x)-f'(x)z{\bf 1}_{\{|z|\leq 1\}})\frac{dz}{|z|^{1+\alpha}}\\
&=\int_0^{+\infty}\frac{f'(x+z)-f'(x-z)}{2}\left(K^\eta z^\alpha{\mathbb P}(|X(x)|\geq K^{\frac{\eta}{\alpha}}z)-\frac{2\sigma(x)}{\alpha}\right)\frac{dz}{z^{\alpha}}.
\end{align*}
Therefore 
\begin{align}
 |R(x)|&\leq \|f''\|_\infty\sup_{u>0,x\in\rit}\Big|u^\alpha\,{\mathbb P}(|X(x)|\geq u)-\frac{2\sigma(x)}{\alpha}\Big|\int_0^{K^{-\frac{\eta}{2\alpha}}}z^{1-\alpha}dz\notag\\
&+\|f''\|_\infty\sup_{u>K^{\frac{\eta}{2\alpha}},x\in\rit}\Big|u^\alpha{\mathbb P}(|X(x)|\geq u)-\frac{2\sigma(x)}{\alpha}\Big|\int_{K^{-\frac{\eta}{2\alpha}}}^1z^{1-\alpha}dz\notag\\
&+\sup_{u>K^{\frac{\eta}{\alpha}},x\in\rit}\Big|u^\alpha{\mathbb P}(|X(x)|\geq u)-\frac{2\sigma(x)}{\alpha}\Big|\int_1^{+\infty}|f'(x+z)-f'(x-z)|\frac{dz}{2z^{\alpha}}.\label{contrest}
\end{align}
By {\bf (H2)} $\sup_{u>K^{\frac{\eta}{2\alpha}},x\in\rit}\Big|u^\alpha{\mathbb P}(|X(x)|\geq u)-\frac{2\sigma(x)}{\alpha}\Big|$ tends to $0$ as $K\to\infty$. Moreover for $v>0$,
$$\sup_{u>0,x\in\rit}\Big|u^\alpha{\mathbb P}(|X(x)|\geq u)-\frac{2\sigma(x)}{\alpha}\Big|\leq v^\alpha+\frac{2}{\alpha}\sup_{x\in\rit}\sigma(x)+\sup_{u>v,x\in\rit}\Big|u^\alpha{\mathbb P}(|X(x)|\geq u)-\frac{2\sigma(x)}{\alpha}\Big|$$
with the right-hand-side finite when $v$ is large enough.\\
Last, $\int_1^{+\infty}|f'(x+z)-f'(x-z)|\frac{dz}{2z^{\alpha}}$ is smaller than $\|f'\|_\infty\int_1^{+\infty}\frac{dz}{z^{\alpha}}$ if $\alpha>1$. If $\alpha\in (0,1]$, it is smaller  than $\|f'\|_\infty\int_1^{1+2C}\frac{dz}{z^{\alpha}}$ when the compactly supported function $f(y)$ vanishes for $|y|\geq C$.
Hence \eqref{contrest} implies the desired uniform convergence as $K\to\infty$.

\end{proof}

%%%%%%%%%%%%%%%%%%%%%%%%%%%%%

\bigskip 
  In what follows, we will denote by $\tilde\sigma$ the product function  defined by $$\tilde\sigma(x){=}p(x)r(x)\sigma(x)$$ and by $\hat{\sigma}$ the function $$\hat{\sigma}(x)=\tilde{\sigma}^{1/\alpha}(x)=\left(p(x)r(x)\sigma(x)\right)^{1/\alpha}.$$

\bigskip
  The nature of the scaling limit of the population point processes $\nu^K$ strongly depends on the value of the allometric exponent $\eta$. For $0<\eta<1$, the limit is deterministic and our convergence result is described in the following  theorem:

\begin{thm}
\label{readif}
%\begin{description}
(i)  Suppose that Assumptions (H1) and (H2) are satisfied, $0<\eta<1$ and the product function $\tilde\sigma$ is continuous.
  Additionally, assume   that, as $K\to \infty$,  the initial conditions $\nu^K_0$ converge in law,
  and for the weak topology on $M_F$, to a finite
  deterministic measure $\xi_0$, and that
  \begin{equation}
    \label{eq:X3}
    \sup_K \E(\langle \nu^K_0,1\rangle^3)<+\infty.
  \end{equation}
  Then, for each $T>0$, the laws $ (Q^K) $ of the processes  $ (\nu^K) $ in  $\dit([0,T],M_F)$ (with $M_F$ endowed with the weak convergence topology) are tight. Moreover,     the weak limit  of each of their convergent subsequences gives  full weight to the process $(\xi_t)_{t\geq 0} \in C([0,T],M_F)$
  satisfying the following condition: for each function $f\in C^2_b(\rit)$,
  \begin{align}
    \label{readif1}
    \langle\xi_t,f\rangle &
    =\langle \xi_0,f\rangle+ \int_0^t\int_{\rit}
    (b(x,V*\xi_s(x))-d(x,U*\xi_s(x)))f(x)\xi_s(dx)ds \notag \\ &
    +\int_0^t\int_{\rit}\tilde\sigma(x)\bigg(\int_\mathbb{R}
(f(x+h) -
    f(x) - f'(x)h{\bf 1}_{\{|h|\leq 1\}}) {dh\over |h|^{1+\alpha}} \bigg)\xi_s(dx)ds.
  \end{align}
\smallskip

(ii)  If we assume additionally that $b(x,z)$, and $d(x,z)$, are Lipschitz continuous in $z$, uniformly for $x\in\rit$, and that $\hat{\sigma}$ is Lipschitz continuous, then \eqref{readif1} has at most  one solution such that $\sup_{t\in[0,T]}\,\langle \xi_t,1\rangle<+\infty$, and, as $K\to\infty$, 
 the processes $(\nu^K)$ converge weakly to this unique solution. 
\smallskip

(iii) Finally, if $\hat{\sigma}\in C^3_b$, i.e., it is bounded together with its derivatives of  order  $\le$ 3,  and  the product function $\tilde{\sigma}(x)>0$, for all $  x\in\rit,$  then,  for each $t>0$, the measure $\xi_t$ has
a density
  with respect to the Lebesgue measure.
%\end{description}
\end{thm}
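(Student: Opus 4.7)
The plan is to prove the three parts in order, following the program of Champagnat--Ferri\`ere--M\'el\'eard \cite{CFM08} adapted to the nonlocal fractional setting.

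\textbf{Part (i), moment bounds and tightness.} First, I would bound $\sup_K \E(\sup_{t\leq T}\langle \nu^K_t,1\rangle^p)$ for $p\leq 3$. Applying the generator \eqref{generator} to $\phi(\nu)=\langle \nu,1\rangle^p$, the $K^\eta r(x)$ contributions of births (with or without mutation) and of deaths cancel by demographic balance, so only the bounded $b$ and linearly-growing $d$ terms remain; combined with \eqref{eq:X3}, a standard Gronwall argument yields the third moment bound. For tightness I would apply the Aldous--Rebolledo criterion to the semimartingale decomposition \eqref{mart1}: the quadratic variation \eqref{quadra1} carries a prefactor $1/K$, and the $K^\eta r(x) f^2$ contribution is of order $C\|f\|_\infty^2\, K^{\eta-1}\to 0$ since $\eta<1$, while the mutation piece is controlled by Proposition \ref{restlapfrac} applied to $f^2$. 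Tightness is first established for the vague topology (taking $f\in C^2_c(\rit)$) and then promoted to the weak topology on $M_F$ through the mass-control lemma (Lemma \ref{controle-masse}) highlighted in the introduction.

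\textbf{Part (i), identification.} Along a convergent subsequence I pass to the limit in \eqref{mart1}. The martingale vanishes in $L^2$ by the quadratic variation estimate. The reaction drift converges by continuity of $b,d,U,V$ and the moment bounds. For the mutation drift I isolate the leading term $K^\eta r(x) p(x) \int (f(x+h)-f(x)) M_K(x,dh)$, which Proposition \ref{restlapfrac} shows to converge uniformly in $x$ to $\tilde\sigma(x) \int (f(x+z)-f(x)-f'(x)z\mathbf{1}_{\{|z|\leq 1\}}) dz/|z|^{1+\alpha}$; the lower-order piece $p(x) b(x,V*\nu^K) \int (f(x+h)-f(x)) M_K(x,dh)$ is $O(K^{-\eta})$ by the same proposition and hence negligible. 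Jumps of $\nu^K$ being of size $1/K$, every limit point lives in $C([0,T],M_F)$.

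\textbf{Part (ii), uniqueness and convergence.} The natural route is to realize \eqref{readif1} as the Fokker--Planck equation of the nonlinear jump process whose infinitesimal generator is $(\hat\sigma(x))^\alpha \mathcal{L}^\alpha + $ (linear birth-death operator with coefficients depending on the current law through $V*\xi_s$ and $U*\xi_s$), equivalently as the flow of marginals of the SDE $dY_t = \hat\sigma(Y_{t-})\,dZ_t$ perturbed by reaction jumps, where $Z$ is a symmetric $\alpha$-stable process. Lipschitz continuity of $\hat\sigma$ gives pathwise uniqueness for the stable-driven SDE part, and Lipschitz continuity of $b$ and $d$ in the convolution variable allows one to close a Picard--Gronwall estimate on a Wasserstein-type distance between two solutions under the a priori bound $\sup_{t\leq T}\langle\xi_t,1\rangle<\infty$. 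Uniqueness of subsequential limits of $(Q^K)$ then upgrades the tightness of part (i) to weak convergence.

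\textbf{Part (iii), absolute continuity, and main obstacle.} Under $\hat\sigma\in C^3_b$ with $\tilde\sigma>0$ pointwise, I exploit the SDE representation from part (ii): localizing on the event that $\hat\sigma(Y_{s-})$ stays bounded away from zero on a short interval ending at $t$, the regularity of the symmetric $\alpha$-stable semigroup together with a Malliavin-type argument for stable-driven SDEs produces a density for the law of $Y_t$, which transfers to $\xi_t$ via the mass representation. The hardest step will be part (ii): making the nonlinear SDE representation rigorous and closing the Wasserstein contraction requires careful handling of the small-jump compensator when $\alpha\leq 1$ (where $\int h\, M_K(x,dh)$ need not vanish and $f$ in \eqref{readif1} must have compact support) and a simultaneous treatment of the Lipschitz fractional-diffusion coefficient and the measure-dependent reaction terms.
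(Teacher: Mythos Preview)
Your treatment of Part (i) matches the paper closely: moment bounds via Gronwall, Aldous--Rebolledo tightness in the vague topology, upgrade to the weak topology through Lemma~\ref{controle-masse}, and identification by passing to the limit in \eqref{mart1} with the martingale vanishing in $L^2$ since $\eta<1$. Part (iii) is also in the same spirit as the paper, which invokes the Malliavin-calculus density criterion of Bichteler--Gravereaux--Jacod for the stable-driven SDE \eqref{edspetitsauts} and then reads off the density of $\xi_t$ from the mild formulation.

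The real divergence, and the real gap, is in Part (ii). Your plan is to cast \eqref{readif1} as a nonlinear (McKean--Vlasov) SDE ``perturbed by reaction jumps'' and close a Wasserstein-type contraction. This does not go through as stated: the reaction term $(b-d)\xi$ is not mass-conserving, so $(\xi_t)_t$ is not a flow of probability measures and a Wasserstein distance between two solutions is not even well defined without substantial additional structure; absorbing birth--death into a single-particle jump mechanism is vague and does not yield a closed Gronwall inequality on any transport metric. The paper takes a different and cleaner route: it passes to the \emph{mild} (Duhamel) formulation using the semigroup $P_t$ of the \emph{linear} SDE $dX^x_t=\hat\sigma(X^x_{t^-})\,dZ_t$. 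Substituting $\psi(s,x)=P_{t-s}f(x)$ in the weak equation cancels the fractional generator and leaves only bounded reaction and large-jump terms, after which two solutions are compared directly in the total-variation norm and Gronwall closes the estimate \eqref{majodualvar}. The genuine technical obstacle---which your sketch does not anticipate---is that for merely Lipschitz $\hat\sigma$ the function $P_{t-s}f$ need not be $C^2$ in $x$, so the substitution is illegal. The paper handles this by mollifying $\hat\sigma$ to $\hat\sigma^\varepsilon$, establishing quantitative regularity and convergence of $P^\varepsilon_t f$ (Lemmas~\ref{regukolmog} and \ref{contderpeps}), and letting $\varepsilon\to 0$ with an explicit $O(\varepsilon^{(2-\alpha)/4})$ bound on $(L-L^\varepsilon)P^\varepsilon_{t-s}f$. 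This regularization step, rather than the $\alpha\leq 1$ versus $\alpha>1$ dichotomy you flag, is the heart of the uniqueness proof.
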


%\begin{rema}

\bigskip

The following remarks are immediate corollaries of   the above Theorem.
 \medskip

 \noindent {\it Remark 3.1.}   The limit~(\ref{readif1}) does not depend on 
$\eta\in(0,1)$. As will appear in the proof, this
is implied by  the fact that the growth rate $b_K-d_K$ does not depend on $\eta$, and that the mutation kernel $M_K(x,z)$
compensates exactly the dispersion in the trait space induced by the
acceleration of the births with mutations.

\medskip
\noindent  {\it Remark 3.2.}  In the case  considered in Theorem \ref{readif}  {\it (iii)}, Eq.~(\ref {readif1}) may be written in the form
\begin{equation}
  \label{reacdiff}
  \partial_t
  \xi_t(x)=\bigg(b(x,V*\xi_t(x))-d(x,U*\xi_t(x))\bigg)\xi_t(x) +
   D^{\alpha}(\tilde{\sigma}\, \xi_t)(x),
\end{equation}
where 
$$
D^\alpha f(x)=\int_\mathbb{R}
(f(x+h) -
    f(x) - f'(x)h{\bf 1}_{\{|h|\leq 1\}}) {dh\over |h|^{1+\alpha}},
    $$
    denotes the fractional Laplacian of order
$\alpha$. Note that, by the change of variable $z=h/\hat{\sigma}(x)$,
\begin{equation}
\tilde{\sigma}(x)D^\alpha f(x)=\int_\mathbb{R}
(f(x+\hat{\sigma}(x)z) -
    f(x) - f'(x)\hat{\sigma}(x)z{\bf 1}_{\{|z|\leq 1\}}) {dz\over |z|^{1+\alpha}}.
\label{chgtvar}\end{equation}

\medskip
 \noindent  {\it Remark 3.3.}  Theorem \ref{readif} also proves the existence
of a weak solution for (\ref{reacdiff}).
Equation (\ref{reacdiff}) generalizes the Fisher
reaction-diffusion equation known from classical population genetics
(see e.g.~\cite{Bu00}) with a fractional Laplacian term replacing the classical Laplacian. It
  justifies 
  the L\'evy flight modeling in ecology as approximations for models
 in which  mutations have  heavy tail. Such models were judged justified by real-life data  in many areas of physics and economics, and in some popular literature devoted to the 2008 global  financial 
crisis. 
 \bigskip
 
%\end{rema}

In the case of the allometric exponent $\eta=1$ the scaling limit of the population point processes $\nu^K$  has a richer structure of a nonlinear stochastic superprocess which is described below.
\bigskip

\begin{thm}
\label{readifstoch}
(i) Suppose that Assumptions (H1) and (H2)  are satisfied, $\eta=1$  and that $\tilde{\sigma}$ is  continuous. Additionally, assume   that, as $K\to \infty$, the
initial conditions $\nu^K_0$ converge in law, and for the weak topology
on $M_F$, to a finite (possibly random)
measure $X_0$, and that
\begin{equation}
   \label{eq:X4}
 \sup_K \E(\langle \nu^K_0,1\rangle^4)<+\infty.
\end{equation}
Then, for each  $T>0$, the  laws of the processes $\nu^K\in \dit([0,T],M_F)$ are tight  and the limiting values are  
superprocesses $X \in C([0,T],M_F)$  satisfying the following two 
conditions:
\begin{equation}
  \label{eca}
   \sup_{t \in [0,T]}\E \left( \langle X_t,1\rangle^4 \right) <\infty,
\end{equation}
and, for any $f \in C^2_b(\mathbb{R})$,
\begin{align}
  \label{dfmf}
  \bar M^f_t&=\langle X_t,f\rangle - \langle X_0,f\rangle - 
  \int_0^t  \int_{\rit}  \tilde\sigma(x)\,D^\alpha f(x)
 X_s(dx)ds \notag
  \\ &- \int_0^t \int_{\rit} f(x)\left(
    b(x,V*X_s(x))-d(x,U*X_s(x))\right)X_s(dx)ds
\end{align}
is a continuous martingale with the quadratic variation

\begin{equation}
  \label{qvmf}
  \langle\bar M^f\rangle_t= 2\int_0^t \int_{\rit}r(x) f^2(x) X_s(dx)ds.
\end{equation}
(ii) Assume moreover that $\hat{\sigma}$ is Lipschitz continuous and that $r$ is bounded from below by a positive constant. Then there is a unique such limiting superprocess.
\end{thm}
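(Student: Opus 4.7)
The plan follows the strategy of Theorem \ref{readif}, with the crucial difference that when $\eta=1$ the prefactor $K^\eta/K$ equals one, so the martingale quadratic variation \eqref{quadra1} no longer vanishes and stochasticity persists in the limit. Hypothesis \eqref{eq:X4} propagates into a uniform bound on $\E[\sup_{t\leq T}\langle \nu^K_t,1\rangle^4]$ and, via Fatou, into \eqref{eca}.

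\textbf{Part (i): tightness and identification.} I would first apply \eqref{eq:mart-gal} to $\phi(\nu)=\langle\nu,1\rangle^p$ for $p\in\{1,2,4\}$ (mass-preserving mutations drop out of $L^K\phi$) and invoke Gronwall to obtain the uniform fourth-moment bound. Tightness of $(\nu^K)$ in $\dit([0,T],M_F)$ then follows from Aldous-Rebolledo applied to each $\langle\nu^K_\cdot,f\rangle$, $f\in C^2_b(\rit)$: Proposition \ref{restlapfrac} bounds $K^\eta\int(f(\cdot+h)-f(\cdot))M_K(\cdot,dh)$ uniformly in $x$ and $K$, so the finite-variation part and the quadratic variation in \eqref{mart1}--\eqref{quadra1} are controlled in terms of the mass moments. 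Tightness for the vague topology is lifted to the weak topology via Lemma \ref{controle-masse}. For any subsequential limit $X$, Proposition \ref{restlapfrac} converts the mutation term of \eqref{mart1} into $\int\tilde\sigma(x)D^\alpha f(x)X_s(dx)ds$ while the other drifts pass by continuity, producing \eqref{dfmf}. Since $\Delta\langle\nu^K,f\rangle=O(1/K)$, the limit $X$ lives in $C([0,T],M_F)$ and $\bar M^f$ is continuous. In \eqref{quadra1} the $1/K$ factor now cancels the leading $2Kr(x)$ contribution, yielding \eqref{qvmf}; the mutation contribution to the quadratic variation is $O(1/K)$ (apply Proposition \ref{restlapfrac} to $f^2$) and drops out.

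\textbf{Part (ii): uniqueness.} This will be the main obstacle. Using \eqref{chgtvar}, the operator $Af(x):=\tilde\sigma(x)D^\alpha f(x)$ is the generator of the symmetric $\alpha$-stable SDE $dY_s=\hat\sigma(Y_s)\,dL_s$, which is strongly well-posed by the Lipschitz hypothesis on $\hat\sigma$ and produces a Feller semigroup $(P_t)$. Testing \eqref{dfmf} against the time-dependent function $(s,x)\mapsto P_{t-s}f(x)$ yields the mild representation
\begin{equation*}
\langle X_t,f\rangle = \langle X_0,P_t f\rangle + \int_0^t\!\int_\rit\bigl(b(x,V*X_s(x))-d(x,U*X_s(x))\bigr)P_{t-s}f(x)\,X_s(dx)\,ds + \tilde M^f_t,
\end{equation*}
with $\tilde M^f_t$ a continuous martingale of quadratic variation $\int_0^t\!\int 2r(x)(P_{t-s}f(x))^2 X_s(dx)\,ds$. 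I would prove weak uniqueness by a McKean-Vlasov-type fixed-point argument on laws over $C([0,T],M_F)$: freezing $V*X_s,\,U*X_s$ to driver paths reduces the nonlinear problem to a linear superprocess martingale problem with bounded time-inhomogeneous mass-dependent drift, for which law-uniqueness is classical. The Lipschitz hypotheses on $b$ and $d$ together with the lower bound on $r$ (which keeps the coefficient $\sqrt r$ Lipschitz on the relevant range in the white-noise representation of $\tilde M^f$) make the associated map on laws a contraction in a suitable Wasserstein-type distance. The unique fixed point is the unique solution, and convergence of the whole sequence $(\nu^K)$ then follows from the uniqueness of subsequential limits.
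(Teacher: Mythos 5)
Your Part (i) follows the paper's own route (moment bounds from Lemma \ref{mep}, Aldous--Rebolledo tightness, lifting from the vague to the weak topology via Lemma \ref{controle-masse}, identification of the drift via Proposition \ref{restlapfrac}); the only point stated too casually is the identification of the bracket: one cannot simply ``pass to the limit'' in \eqref{quadra1}. The paper instead shows that the process $\bar N^f_t$ built from $\langle X_t,f\rangle^2$ is a martingale, by applying \eqref{eq:mart-gal} to $\phi(\nu)=\langle\nu,f\rangle^2$ and using Lemma \ref{mep} with $p=4$ for uniform integrability, and then compares with It\^o's formula to read off \eqref{qvmf}. This is a standard but necessary extra step that your sketch omits.

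Part (ii) contains a genuine gap. The nonlinearity $b(x,V*X_s(x))-d(x,U*X_s(x))$ depends on the \emph{realization} of the random measure $X_s$, not on its law, so this is not a McKean--Vlasov structure. Freezing $V*X_s,\,U*X_s$ to ``driver paths'' and setting up a contraction on laws over $C([0,T],M_F)$ does not work: the driver is itself random and must be coupled to the solution, and in a weak (martingale-problem) formulation two candidate solutions live on a priori different probability spaces, so there is no pathwise quantity on which a Wasserstein contraction can act. This is precisely why the paper invokes the Dawson--Girsanov transform of Evans and Perkins: it removes the interaction drift at the level of \emph{laws}, reducing uniqueness to the case $b=d=0$, where the process is a (non-interacting) superprocess whose law is pinned down by its Laplace functional following Fitzsimmons. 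The hypothesis that $r$ is bounded below is what makes the Girsanov density well defined (the drift must be absorbable into the branching term), not a Lipschitz property of $\sqrt{r}$. A second, more technical gap: your mild representation ``test \eqref{dfmf} against $\psi(s,x)=P_{t-s}f(x)$'' is exactly the step that fails when $\hat{\sigma}$ is merely Lipschitz, since $P_{t-s}f$ need not be $C^2$ in $x$ and hence is not an admissible test function. The paper handles this by mollifying, $\hat{\sigma}^\varepsilon=\hat{\sigma}*g_\varepsilon$, applying Lemma \ref{regukolmog} to $P^\varepsilon_{t-s}f$, and controlling $(L-L^\varepsilon)P^\varepsilon_{t-s}f$ by the H\"older estimate of Lemma \ref{contderpeps} before letting $\varepsilon\to0$; without this your fixed-point equation is not justified even for the frozen linear problem.
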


%\begin{rema} 
\bigskip
\noindent {\it Remark 3.4.} Here again, as in the case of Theorem \ref{readif}, Theorem \ref{readifstoch} yields the existence of 
a measure-valued process $X$ which is a 
weak solution of the stochastic partial differential equation with fractional diffusion operator
  \begin{equation*}
      \label{eq:EDPS}
    \partial_t X_t(x) =\bigg(b(x,V*X_t(x))-d(x,U*X_t(x))\bigg)X_t(x)
   +   D^{\alpha}(\tilde{\sigma}\, X_t)(x)+ \dot{M}_t,
  \end{equation*}
  where $\dot{M}_t$ is a random fluctuation term reflecting the
  demographic stochasticity of this fast birth-and-death process;  the process
is  faster than the accelerated birth-and-death process which
  led to the deterministic reaction-diffusion approximation~(\ref{reacdiff}).
%\end{rema}
%%%%%%%%%%%%%%%%%%%%%%%%%%%%%%%%%%%%%%%%%%%%%%%%%%%%%%%%%%%%%%%%%%%%%

\section{Auxiliary Lemmas}

In this section we provide auxiliary lemmas needed in the proofs of Theorems   \ref{readif}, and \ref{readifstoch}. The latter will be completed in Section 5. We begin with 
a lemma which gives uniform estimates for the moments of the population process. Its proof  can be easily adapted from the the proof of an analogous result in   \cite{FM04} and is thus omitted.

\begin{lem}
\label{mep}
Assume that    $p\geq 2$, and $\: \sup_K \E(\langle \nu^K_0,1\rangle^p)<+\infty$. Then 
\begin{equation}
  \label{moment1}
\sup_{K}  \left(\E\Big( \sup_{t \in [0,T]}  \langle \nu^K_t,1\rangle^p\Big) \right) <\infty.
\end{equation}
\end{lem}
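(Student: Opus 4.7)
The plan is to apply the martingale decomposition~(\ref{eq:mart-gal}) to $\phi(\nu)=\langle\nu,\mathbf{1}\rangle^p$, establish the uniform-in-$K$ bound $|L^K\phi(\nu)|\leq C_p(1+\langle\nu,\mathbf{1}\rangle^p)$, and close by combining Gronwall with the Burkholder--Davis--Gundy (BDG) inequality. Setting $N=\langle\nu,\mathbf{1}\rangle$ and observing that mutations preserve the total mass, formula~(\ref{generator}) simplifies for such $\phi$ to
\begin{align*}
L^K\phi(\nu)&=K\!\int_{\rit}\!\bigl(K^\eta r(x)+b(x,V*\nu(x))\bigr)\bigl[(N+\tfrac{1}{K})^p-N^p\bigr]\nu(dx)\\
&\quad+K\!\int_{\rit}\!\bigl(K^\eta r(x)+d(x,U*\nu(x))\bigr)\bigl[(N-\tfrac{1}{K})^p-N^p\bigr]\nu(dx).
\end{align*}
A second-order Taylor expansion, $\bigl|(N\pm\tfrac{1}{K})^p-N^p\mp\tfrac{p}{K}N^{p-1}\bigr|\leq C_p(1+N^{p-2})/K^2$, shows that the $\pm pN^{p-1}/K$ contributions coming from the dominant $K^\eta r(x)$ rates cancel between births and deaths, leaving a remainder of order $K\cdot K^\eta\bar{r}\cdot(1+N^{p-2})/K^2=O(K^{\eta-1}(1+N^{p-2}))$, uniformly bounded in $K$ since $\eta\leq 1$. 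The $b$ and $d$ contributions are controlled via (H1) by $C(1+\bar{U}N+\bar{V}N)N^{p-1}\nu(dx)$. Integrating in $\nu$ yields $|L^K\phi(\nu)|\leq C_p(1+N^p)$.

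Localizing by $\tau_n=\inf\{t\geq 0:N_t\geq n\}$ turns $M^{K,\phi}_{\cdot\wedge\tau_n}=\phi(\nu^K_{\cdot\wedge\tau_n})-\phi(\nu^K_0)-\int_0^{\cdot\wedge\tau_n}L^K\phi(\nu^K_s)\,ds$ into a bounded martingale. Its quadratic variation, being the compensator of the sum of squared jumps---each of size $O(N^{p-1}/K)$ at total event rate $O(K^{1+\eta}N+KN^2)$---is bounded by $C\int_0^t(K^{\eta-1}N_s^{2p-1}+N_s^{2p}/K)\,ds\leq C\int_0^t(1+N_s^{2p})\,ds$, uniformly in $K\geq 1$. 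Writing
$\sup_{s\leq t\wedge\tau_n}N_s^p\leq N_0^p+\int_0^t|L^K\phi(\nu^K_s)|\mathbf{1}_{\{s\leq\tau_n\}}\,ds+\sup_{s\leq t\wedge\tau_n}|M^{K,\phi}_s|$,
combining Step~1, BDG, and the elementary inequality $\sqrt{1+a^2}\leq 1+a$ applied to $a=\sup_s N_s^p$ yields, for every $T>0$,
$\E\bigl[\sup_{s\leq T\wedge\tau_n}N_s^p\bigr]\leq C\bigl(\E[N_0^p]+T+\sqrt{T}\bigr)+(CT+C\sqrt{T})\,\E\bigl[\sup_{s\leq T\wedge\tau_n}N_s^p\bigr]$.
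For $T$ small enough the last term can be absorbed; iteration over finitely many subintervals together with monotone convergence as $n\to\infty$ concludes the proof.

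The main obstacle is precisely the cancellation in the computation of $L^K\phi$: the birth and death intensities grow individually like $K^{1+\eta}N$, so without the exact equality of the dominant $K^\eta r(x)$ birth and death rates (the demographic balance built into~(\ref{taux})) together with the first-order Taylor cancellation, $|L^K\phi|$ would blow up like $K^\eta N^{p-1}$, making uniform-in-$K$ estimates impossible. A secondary subtlety is the observation that the highest power appearing in the quadratic variation is $N^{2p}$, exactly twice $p$, so that BDG's square root and Jensen bring us back down to $N^p$; this is what makes the bootstrap close under only a $p$-th moment hypothesis on $\nu^K_0$ rather than a $2p$-th one.
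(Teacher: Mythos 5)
Your overall strategy --- apply the generator to $\phi(\nu)=\langle\nu,\mathbf{1}\rangle^p$, exploit the exact cancellation of the dominant $K^{\eta}r(x)$ birth and death contributions guaranteed by the demographic balance in~(\ref{taux}), then localize and close with a maximal inequality and Gronwall --- is the right one, and it is essentially the adaptation of \cite{FM04} that the paper invokes while omitting the proof. You also correctly identify the two points where the argument could fail. However, as written there are two genuine gaps. First, the bound $|L^K\phi(\nu)|\leq C_p(1+N^p)$ is false: by~(\ref{linearity}) the death rate $d(x,U*\nu(x))$ is only bounded by $\bar d(1+\bar U N)$, so your own intermediate estimate $C(1+\bar U N+\bar V N)N^{p-1}$, once integrated against $\nu(dx)$ (total mass $N$), produces a term of order $N^{p+1}$, not $N^{p}$. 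Since your final display uses $\int_0^t|L^K\phi(\nu^K_s)|\,ds$, the Gronwall loop would then involve $\E[\sup_s N_s^{p+1}]$ and does not close. The repair is short but must be said: the offending contribution multiplies $(N-\tfrac1K)^p-N^p\leq 0$, so the death term is nonpositive and only the one-sided bound $L^K\phi(\nu)\leq C_p(1+N^p)$ is needed (and holds); you must therefore replace $|L^K\phi|$ by its positive part in the pathwise inequality for $\sup_s N_s^p$.

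Second, the martingale step conflates the optional and predictable brackets. What you bound by $C\int_0^t(1+N_s^{2p})\,ds$ is the compensator $\langle M^{K,\phi}\rangle_t$, whereas the Burkholder--Davis--Gundy inequality in $L^1$ controls $\E[\sup_s|M_s|]$ by $\E\bigl([M]_t^{1/2}\bigr)$ with $[M]_t=\sum_{s\leq t}(\Delta M_s)^2$. Passing between the two via Jensen puts the expectation inside the square root and forces $2p$-th moments at fixed times, which you do not have; your remark that ``the square root brings $N^{2p}$ back down to $N^p$'' only works if the square root stays inside the expectation. You need either Lenglart's domination inequality, $\E\bigl([M]_t^{1/2}\bigr)\leq 3\,\E\bigl(\langle M\rangle_t^{1/2}\bigr)$ (valid because $\langle M\rangle$ is a continuous, hence predictable, compensator of $[M]$), or the Lenglart--L\'epingle--Pratelli maximal inequality $\E[\sup_s|M_s|]\leq C\,\E\bigl(\langle M\rangle_t^{1/2}\bigr)+C\,\E[\sup_s|\Delta M_s|]$, whose extra term is of order $K^{-1}\E\bigl((1+\sup_sN_s)^{p-1}\bigr)$ and is absorbable by Young's inequality. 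With these two corrections your proof is complete; an alternative that avoids the second issue entirely is to run the decomposition for $\langle\nu^K_t,\mathbf{1}\rangle$ itself (whose bracket involves only $N+N^2$, with a factor $K^{-1}$), drop the negative death drift pathwise, and raise the resulting inequality to the $p$-th power before taking expectations.
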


To provide mass control for $\nu^K$, we first need to study the action of the fractional Laplacian on functions 
 $$
 f_n(x)=\psi(0\vee(|x|-(n-1))\wedge 1),\qquad n\in{\mathbb N}, 
 $$ 
 where the function $\psi(x)=6x^5-15 x^4+10x^3\in C^2$. Observe that $\psi$  is non-decreasing on $[0,1]$, and such that 
 $$
 \psi(0)=\psi'(0)=\psi''(0)=1-\psi(1)=\psi'(1)=\psi''(1)=0.
 $$

\begin{lem}\label{propfn}
For each $n\in{\mathbb N}^*$, the function $f_n$ is in  $C^2$, even, non-decreasing on $\rit_+$, equal to $0$ on $[-(n-1),n-1]$, and to $1$ 
on $(-n,n)^c$. In particular $f_0\equiv 1$. Moreover, 
$$
\sup_{n\in\N^*,x\in\R}|D^\alpha f_n(x)| <+\infty,
$$
 and, for each $  n\in\N^*,$ and $x\in(-n+1,n-1),$
$$
\;|D^\alpha f_n(x)|\leq \frac{2}{\alpha(n-1-|x|)^\alpha}.
$$
Finally, under Assumption { (H2)}, 
$$
\lim_{K\to \infty} K^\eta \int_\rit (f_n(x+h)-f_n(x))M_K(x,dh)=\sigma(x)D^\alpha f_n(x),
$$
 uniformly, for $n\in\N^*$, and $x\in\rit$.
\end{lem}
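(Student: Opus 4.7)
The announced pointwise values of $f_n$ follow immediately from the definition and from $\psi(0)=0$, $\psi(1)=1$. To verify $f_n\in C^2$, I would note that $f_n$ is the composition of the $C^2$ function $\psi$ (extended by $0$ on $(-\infty,0]$ and by $1$ on $[1,\infty)$, which is $C^2$ thanks to $\psi(0)=\psi'(0)=\psi''(0)=0$ and $1-\psi(1)=\psi'(1)=\psi''(1)=0$) with the Lipschitz map $x\mapsto 0\vee(|x|-(n-1))\wedge 1$; since this latter map is $C^2$ away from $\pm(n-1)$ and $\pm n$, and since $\psi$ kills the derivatives at $0$ and $1$, the resulting function is globally $C^2$. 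Evenness is obvious, and non-decrease on $\rit_+$ follows from non-decrease of $\psi$ on $[0,1]$.

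\textbf{Part 2 and 3 (bounds on $D^\alpha f_n$).} The plan is to split the integral defining $D^\alpha f_n(x)$ at $|h|=1$. For the inner part, Taylor's formula gives $|f_n(x+h)-f_n(x)-f_n'(x)h|\leq \tfrac{1}{2}\|f_n''\|_\infty h^2=\tfrac{1}{2}\|\psi''\|_\infty h^2$, so this piece is bounded uniformly in $n$ and $x$ by $\tfrac{\|\psi''\|_\infty}{2}\int_{|h|\leq 1}|h|^{1-\alpha}dh$. For the outer part, $|f_n(x+h)-f_n(x)|\leq 2$ gives a bound of $4/\alpha$. This yields the global bound claimed in Part 2. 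For Part 3, when $x\in(-(n-1),n-1)$ we have $f_n(x)=f_n'(x)=0$, hence $D^\alpha f_n(x)=\int_\rit f_n(x+h)\frac{dh}{|h|^{1+\alpha}}$; the integrand vanishes when $|x+h|\leq n-1$, and $0\leq f_n\leq 1$, so assuming by symmetry $x\geq 0$, the integral is bounded by $\int_{h\geq n-1-x}\frac{dh}{h^{1+\alpha}}+\int_{h\leq -(n-1+x)}\frac{dh}{|h|^{1+\alpha}}\leq \tfrac{2}{\alpha(n-1-|x|)^\alpha}$.

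\textbf{Part 4 (uniform convergence in $n$ and $x$).} The strategy is to redo the proof of Proposition~\ref{restlapfrac} with $f=f_n$, and to check that every constant appearing in the bound \eqref{contrest} can be controlled uniformly in $n$. The second-derivative bound is $\|f_n''\|_\infty=\|\psi''\|_\infty$, independent of $n$. The main obstacle, and the reason the statement is non-trivial, is the last term in \eqref{contrest}: for $\alpha\in(0,1]$, Proposition~\ref{restlapfrac}(ii) required $f$ to be compactly supported, a property that $f_n$ lacks. I would replace the $\|f'\|_\infty$ control by the $L^1$ control
\[
\int_1^{+\infty}\!\!|f_n'(x\pm z)|\frac{dz}{z^\alpha}\leq \int_1^{+\infty}\!\!|f_n'(x\pm z)|\,dz\leq \|f_n'\|_{L^1(\rit)},
\]
valid because $z^\alpha\geq 1$ for $z\geq 1$. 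Since $f_n'$ is non-negative on $(n-1,n)$ and non-positive on $(-n,-(n-1))$ with total variation $2$, $\|f_n'\|_{L^1(\rit)}=2$ uniformly in $n$. Plugging this uniform bound, together with the uniform $C^2$ control and the fact that the suprema $\sup_{u>K^{\eta/(2\alpha)},x}|u^\alpha\mathbb{P}(|X(x)|\geq u)-2\sigma(x)/\alpha|$ are independent of $n$ and tend to $0$ by~(H2), into \eqref{contrest} proves the claimed uniform convergence.
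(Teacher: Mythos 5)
Your proposal is correct and follows essentially the same route as the paper: Taylor at $|h|\le 1$ plus boundedness of $f_n$ for the uniform bound, the vanishing of $f_n$, $f_n'$ near $x$ for the local bound, and a rerun of the estimate \eqref{contrest} for the uniform convergence. The one point where wording differs is the control of $\int_1^{\infty}|f_n'(x+z)-f_n'(x-z)|\frac{dz}{2z^{\alpha}}$: you use $\|f_n'\|_{L^1}=2$, the paper uses that $f_n'$ vanishes outside $[-n,-n+1]\cup[n-1,n]$ together with $\|f_n'\|_\infty=\|\psi'\|_\infty$ — these are the same observation.
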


\begin{proof}
%[of Lemma \ref{propfn}]
For $n\in{\mathbb N}^*$, let us check the three last statements for $f_n$, the others being obvious. Using the Taylor expansion for $|y|\leq 1$ and remarking that, for each $ x,z\in\rit$, $|f_n(z)-f_n(x)|\leq 1$, one has
\begin{align*}
 |D^\alpha f_n(x)|&\leq \int_{|y|\leq 1}\frac{\sup_{z\in[0,1]}|\psi''(z)|y^2}{2}\times\frac{dy}{|y|^{1+\alpha}}+\int_{|y|\geq 1}\frac{dy}{|y|^{1+\alpha}}\\
&\leq \frac{\sup_{z\in[0,1]}|\psi''(z)|}{2-\alpha}+\frac{2}{\alpha}.
\end{align*}
For $x\in(-n+1,n-1)$, since $f_n(x)=f'_n(x)=0$, and $f_n(x+y)=0$ if $|y|\leq n-1-|x|$, 
$$|D^\alpha f_n(x)|=\left|\int_\rit \frac{f_n(x+y)dy}{|y|^{1+\alpha}}\right|\leq \int_{|y|>n-1-|x|}\frac{dy}{|y|^{1+\alpha}}=\frac{2}{\alpha(n-1-|x|)^\alpha}.$$
Since $\sup_{n\in\N^*,x\in\R}|f_n''(x)|=\sup_{x\in[0,1]}|\psi''(x)|<+\infty$, writing 
\eqref{contrest} for the function $f_n$ we see that to establish the uniform convergence we only need to check that $\sup_{n\in\N^*,x\in\R}\int_1^{+\infty}|f'_n(x+z)-f'_n(x-z)|\frac{dz}{2z^\alpha}<+\infty$. Since $\sup_{n\in\N^*,x\in\R}|f_n'(x)|=\sup_{x\in[0,1]}|\psi'(x)|<+\infty$, and $f'_n$ vanishes outside $[-n,-n+1]\cup[n-1,n]$, $$\sup_{n\in\N^*,x\in\R}\int_1^{+\infty}|f'_n(x+z)-f'_n(x-z)|\frac{dz}{2z^\alpha}\leq \sup_{x\in[0,1]}|\psi'(x)|\int_1^2\frac{dz}{z^\alpha},$$ which concludes the proof.
\end{proof} 

\bigskip

The next lemma provides   mass control for the sequence $(\nu^K)$.

\begin{lem}
\label{controle-masse}Under the assumptions of Theorem \ref{readif} or Theorem \ref{readifstoch}, 
$$\lim_{n\to\infty}\limsup_{K\to\infty}\E\left(\sup_{t\leq T}\langle \nu^K_t,f_n\rangle\right)=0.$$
\end{lem}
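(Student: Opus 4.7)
My plan is to apply the semimartingale identity \eqref{mart1} to the test function $f_n$, take $\sup_{t\le T}$ followed by expectation, and control the resulting four contributions (initial mass, reaction drift, mutation drift, and martingale) using Lemmas~\ref{mep} and~\ref{propfn}. Writing $\Phi_n^K(T):=\E\sup_{t\le T}\langle\nu^K_t,f_n\rangle$, the aim is an integral inequality of the form
\[
\Phi_n^K(T)\le \varepsilon_n^K+\bar b\int_0^T\Phi_n^K(s)\,ds+\gamma\int_0^T\Phi_{\lfloor n/2\rfloor}^K(s)\,ds,
\]
with $\varepsilon_n^K\to 0$ first as $K\to\infty$ and then as $n\to\infty$, and $\gamma$ independent of $n,K$; Gronwall together with a factorial iteration then conclude.

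The easy pieces are handled as follows. Since $f_n$ is bounded continuous and $\nu^K_0$ converges weakly to $\xi_0$ (resp.\ $X_0$) while \eqref{eq:X3}--\eqref{eq:X4} provide uniform integrability, $\limsup_K\E\langle\nu^K_0,f_n\rangle=\E\langle\xi_0,f_n\rangle\to 0$ as $n\to\infty$ by monotone convergence since $f_n\downarrow 0$ pointwise and $\xi_0$ is a finite measure. The reaction drift obeys $(b-d)f_n\le \bar b f_n$, producing the $\bar b\int_0^T\Phi_n^K(s)\,ds$ Gronwall term. The martingale is handled via Doob's inequality combined with BDG applied to \eqref{quadra1}: in the case $\eta<1$ (Theorem~\ref{readif}) the prefactor $K^{\eta-1}$ in the quadratic variation renders its contribution $o_K(1)$; in the case $\eta=1$ (Theorem~\ref{readifstoch}) one exploits the key pointwise observation $f_n^2\le f_{\lfloor n/2\rfloor}$ (indeed $f_n^2$ vanishes on $\{|x|<n-1\}$ while $f_{\lfloor n/2\rfloor}\equiv 1$ on $\{|x|\ge \lfloor n/2\rfloor\}\supset\{|x|\ge n-1\}$) to bound $\E\langle M^{K,f_n}\rangle_T$ by $C\int_0^T\Phi_{\lfloor n/2\rfloor}^K(s)\,ds+o_K(1)$; a Young-type inequality then absorbs the resulting square-root into the same self-referential integral plus a vanishing remainder.

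The mutation drift is the core step. By the last statement of Lemma~\ref{propfn}, the integrand
\[
p(x)(K^\eta r(x)+b(x,V*\nu^K_s(x)))\Bigl(\int_\R f_n(x+h)M_K(x,dh)-f_n(x)\Bigr)
\]
equals $\tilde\sigma(x)D^\alpha f_n(x)$ up to a remainder that is $o_K(1)$ uniformly in $(n,x)$. We then split $\R$ along $\{|x|\le n/2\}$ versus $\{|x|>n/2\}$ and invoke the two bounds in Lemma~\ref{propfn}: on the inner region $|D^\alpha f_n(x)|\le C/n^\alpha$, which contributes at most $CM/n^\alpha$ after integration against $\nu^K_s$ (with $M$ the uniform mass bound from Lemma~\ref{mep}); on the outer region only $|D^\alpha f_n|\le A$ is available, but $\mathbf{1}_{\{|x|>n/2\}}\le f_{\lfloor n/2\rfloor}(x)$ yields a contribution at most $A\|\tilde\sigma\|_\infty\langle\nu^K_s,f_{\lfloor n/2\rfloor}\rangle$, producing the $\gamma\int_0^T\Phi_{\lfloor n/2\rfloor}^K(s)\,ds$ term in the target inequality.

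Applying Gronwall to the assembled inequality and taking $\limsup_K$ (with $V_n(t):=\limsup_K\Phi_n^K(t)$ and $a_n:=\limsup_K\varepsilon_n^K\to 0$) yields the recursion
\[
V_n(T)\le a_n+\gamma'\int_0^T V_{\lfloor n/2\rfloor}(s)\,ds
\]
for some $\gamma'>0$. Iterating $k$ times while crucially keeping the right-hand side in integral form (so the nested integrals $\int_0^T\!\!\int_0^{s_1}\!\!\cdots\int_0^{s_{k-1}}ds_k\cdots ds_1=T^k/k!$ generate factorials), and using the a priori bound $V_m\le M$ from Lemma~\ref{mep}, one obtains
\[
V_n(T)\le\sum_{j=0}^{k-1}\frac{(\gamma' T)^j}{j!}\,a_{\lfloor n/2^j\rfloor}+\frac{(\gamma' T)^k}{k!}\,M.
\]
Sending first $k\to\infty$ kills the tail term by the factorial decay, and then sending $n\to\infty$ makes each $a_{\lfloor n/2^j\rfloor}\to 0$ for every fixed $j$, yielding $V_n(T)\to 0$. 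The main obstacle is the boundary region $|x|\sim n$: the narrow transition strip of $f_n$ forces only the $O(1)$ uniform bound on $|D^\alpha f_n|$, producing the self-referential $\Phi_{\lfloor n/2\rfloor}^K$ term; the factorial decay obtained by keeping the integrals unopened is what closes the scheme, and the auxiliary inequality $f_n^2\le f_{\lfloor n/2\rfloor}$ is precisely what preserves this structure for the martingale in the $\eta=1$ case.
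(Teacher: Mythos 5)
Your overall architecture coincides with the paper's: test \eqref{mart1} against $f_n$, use the two bounds on $D^\alpha f_n$ from Lemma~\ref{propfn} to split $\R$ into an inner region where $|D^\alpha f_n|=O(n^{-\alpha})$ and an outer region where the indicator is dominated by a coarser $f_m$, and close a self-referential Gronwall recursion whose iteration produces factorials. For $\eta<1$ your argument is correct and essentially identical to the paper's (your halving $n\mapsto\lfloor n/2\rfloor$ plays the role of the paper's subtraction $n\mapsto n-m$; both keep the nested time integrals that generate the $1/j!$ decay, and your double limit $k\to\infty$ then $n\to\infty$ is legitimate by dominated convergence on the series).

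The gap is in the $\eta=1$ martingale term. Your bound $\E\langle M^{K,f_n}\rangle_T\leq C\int_0^T\Phi^K_{\lfloor n/2\rfloor}(s)\,ds+o_K(1)$ is fine, but Doob gives $\E\sup_{t\le T}|M^{K,f_n}_t|\leq 2\bigl(\E\langle M^{K,f_n}\rangle_T\bigr)^{1/2}$, and the claimed Young-type absorption of this square root ``into the same self-referential integral plus a vanishing remainder'' does not work: $2\sqrt{y}\leq\delta+y/\delta$ produces an additive constant $\delta$ that vanishes neither in $K$ nor in $n$, and letting $\delta=\delta_n\to0$ inflates the Gronwall constant to $C/\delta_n$, which destroys the factorial iteration (the limiting bound becomes $\delta e^{CT/\delta}\to\infty$). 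Nor can the square root be absorbed into the left-hand side, since it involves $\Phi^K_{\lfloor n/2\rfloor}\geq\Phi^K_n$; passing to limits in the recursion with the square root retained only yields $L\leq\gamma TL+2\sqrt{CTL}$, which does not force $L=0$. The paper resolves this with a two-stage bootstrap that your scheme is missing: first run the recursion on $\sup_{t\le T}\E\langle\nu^K_t,f_n\rangle$ (expectation outside, no supremum inside, so the martingale drops out entirely) to get $\lim_n\limsup_K\sup_{t\le T}\E\langle\nu^K_t,f_n\rangle=0$; then feed this back into \eqref{quadra1}, via $f_n^2\leq f_n$ and $\int_\R f_n(x+h)M_K(x,dh)\leq f_{n-1}(x)+O(1/K)$, to conclude that $\E\langle M^{K,f_n}\rangle_T$ itself tends to $0$ in the iterated limit; only then apply Doob in the $\E\sup$ recursion, where the martingale contribution is now a genuinely vanishing $\varepsilon_{K,n}$. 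You should replace the Young step by this bootstrap.
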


\bigskip

\begin{proof}
%[ of Lemma \ref{controle-masse}]
 Let $M^{K,n}_t$ denote the square integrable martingale defined by \eqref{mart1} with $f_n$ replacing $f$. The boundedness of $r$ and $\sigma$ together with the last assertion in Lemma \ref{propfn} ensures the existence of a sequence $(\varepsilon_K)_K$ converging to $0$ such that
\begin{align}
 \langle \nu^K_t,f_n\rangle&\leq \langle \nu^K_0,f_n\rangle + M_t^{K,n}+\bar{b}\int_0^t\langle \nu^K_s,f_n\rangle ds+\varepsilon_K\int_0^t\langle \nu^K_s,1\rangle ds\notag\\&\hskip2cm+\int_0^t\int_\R r(x)p(x)\sigma(x)D^\alpha f_n(x)\nu^K_s(dx)ds.
\end{align}
For $n\geq m>1$, splitting $\mathbb{R}$ into $(-n+m, n-m)$ and its complement, and  using Lemma \ref{propfn},  we get
\be
\label{recfn}
\int_\R&& r(x)p(x)\sigma(x)D^\alpha f_n(x)\nu^K_s(dx)\notag\\
&&\leq \bar{r}\sup_{x\in\rit}\sigma(x)\left(\sup_{l,x}|D^\alpha f_l(x)|\langle \nu^K_s,f_{n-m}\rangle+\frac{2}{\alpha (m-1)^\alpha}\langle \nu^K_s,1\rangle\right).
\ee

\noindent Since the sequence $(f_n)_n$ is non-increasing,  $\langle \nu^K_s,f_n\rangle \leq \langle \nu^K_s,f_{n-m}\rangle$, and
there is a constant $C$ not depending on $n\geq 2$ and $m\in\{2,\hdots,n\}$, and a sequence $(\eta_m)_{m\geq 2}$ of positive numbers converging to $0$ such that
\begin{align}
 \langle \nu^K_t,f_n\rangle&\leq \langle \nu^K_0,f_n\rangle + M_t^{K,n}+C\int_0^t\langle \nu^K_s,f_{n-m}\rangle ds+\left(\varepsilon_K+\eta_m\right)\int_0^t\langle \nu^K_s,1\rangle ds.\label{majomu}\end{align}
Let $\mu^{K,n}_t=E\left(\sup_{s\leq t}\langle \nu^K_s,f_n\rangle\right)$ and $\mu^K_t=E\left(\sup_{s\leq t}\langle \nu^K_s,1\rangle\right)$ which is bounded uniformly in $K$ and $t\in[0,T]$ since, 
 using Lemma \ref{mep}, 
\begin{equation}
\label{me2}
\sup_K \E \big(\sup_{t \in [0,T]} \langle \nu^K_t,1\rangle^3\big)
<\infty.
\end{equation}

\medskip
 Assume, first,  that $0<\eta<1$. Observing  that, in view of  \eqref{quadra1},
$\langle M^{K,n}\rangle_t\leq CK^{\eta-1}\int_0^t\langle \nu^K_s,1\rangle+\langle \nu^K_s,1\rangle^2 ds$, and using Doob's inequality and \eqref{me2}, one deduces that
\begin{align}
\mu^{K,n}_t\leq \mu^{K,n}_0+C\int_0^t\mu^{K,n-m}_s ds+\varepsilon_K+\eta_m\label{itermu}\end{align}
for the modified sequences $(\varepsilon_K)_K$ and $(\eta_m)_m$ that still converge to $0$, as $K$ and $m$, respectively, grow to $\infty$. Iterating  this inequality yields,  for $j\in\N^*$, and $m>1$, 
\begin{align*}
 \mu^{K,jm}_T&\leq \sum_{l=0}^{j-1}\mu^{K,(j-l)m}_0\frac{(CT)^l}{l!}+\frac{C^{j}T^{j-1}\int_0^T\mu^K_sds}{(j-1)!}+(\varepsilon_K+\eta_m)\sum_{l=0}^{j-1}\frac{(CT)^l}{l!}\\
&\leq \mu^{K,\lfloor j/2\rfloor m}_0e^{CT}+E\left(\langle \nu^K_0,1\rangle\right)\sum_{l=\lfloor (j+3)/2\rfloor}^{+\infty}\frac{(CT)^l}{l!}+\frac{C'(CT)^{j}}{(j-1)!}+(\varepsilon_K+\eta_m)e^{CT},
\end{align*}
where we used the monotonicity of $\mu^{K,n}_0$ w.r.t. $n$, and \eqref{me2}  to justify  the second inequality. 

The random variables $(\langle \nu^K_0,f_n\rangle)_K$ converge in law to $\langle \xi_0,f_n\rangle$, as $K\to\infty$, and are uniformly integrable according to \eqref{eq:X3}. Therefore,  for a fixed $n$, $\mu^{K,n}_0$ converges to $\langle \xi_0,f_n\rangle$, as $K\to\infty$.
Hence, 
$$\limsup_{K\to\infty}\mu^{K,jm}_T\leq \langle \xi_0,f_{\lfloor j/2\rfloor m}\rangle e^{CT}+\sup_K \E\left(\langle \nu^K_0,1\rangle\right)\sum_{l=\lfloor (j+3)/2\rfloor}^{+\infty}\frac{(CT)^l}{l!}+\frac{C'(CT)^{j}}{(j-1)!}+\eta_me^{CT}.$$
 For $m,j$ large enough, the right-hand-side is arbitrarily small. Since $n\mapsto \mu^{K,n}_T$ is non-increasing, the proof is complete in the case $0<\eta<1$.

\medskip
 When $\eta=1$, $\sup_n\E(\sup_{t\leq T}\langle M^{K,n}\rangle_t)$ does not vanish anymore as $K\to\infty$, and the proof cannot be concluded   in the same way as above. But, taking expectations in \eqref{majomu} we obtain that \eqref{itermu} holds with $\sup_{s\leq t}\E(\langle \nu^K_s,f_n\rangle)$, $\E(\langle \nu^K_0,f_n\rangle)$ and $\sup_{r\leq s}\E(\langle \nu^K_r,f_{n-m}\rangle )$, replacing,  respectively,  $\mu^{K,n}_t$, $\mu^{K,n}_0$, and $\mu^{K,n-m}_s$. Following the previous line of reasoning we obtain that $$\lim_{n\to\infty}\limsup_{K\to\infty}\sup_{t\leq T}\E(\langle \nu^K_t,f_n\rangle)=0.$$
Now,  in view of  \eqref{quadra1},  and because $f_n^2\leq f_n$, one gets
\begin{align*}
 \langle M^{K,n}\rangle_t\leq\bar{r}\int_0^t\left(2\langle \nu^K_s,f_n\rangle+\int_{\rit^2}f_n(x+h)M_K(x,dh)\nu^K_s(dx)\right)ds+\varepsilon_K,
\end{align*}
with $(\varepsilon_{K})_{K}$ tending to $0$ as $K$ converges to infinity. 
Since by  \eqref{heavytail}, for $K$ large enough, 
 \begin{align*}
 \int_{\rit}f_n(x+h)M_K(x,dh)&=\E\left(f_n\Big(x+\frac{X(x)}{K^{1/\alpha}}\Big)\right)\leq f_{n-1}(x)+P\Big(\frac{X(x)}{K^{1/\alpha}}>1\Big)\\&\leq f_{n-1}(x)+\frac{1}{K}\Big(\frac{2\sup_{y\in\rit}\sigma(y)}{\alpha}+1\Big),
\end{align*}
one deduces 
  that $\lim_{n\to\infty}\limsup_{K\to\infty}E(\langle M^{K,n}\rangle_T)=0$. 
Taking advantage of  \eqref{majomu} and Doob's inequality one checks that \eqref{itermu} holds with $\varepsilon_{K}$ replaced by $\varepsilon_{K,n}$ such that $\lim_{n\to\infty}\limsup_{K\to\infty}\varepsilon_{K,n}=0$ on  the right-hand-side. Then one easily adapts the end of the proof written for $\eta<1$ to obtain the desired conclusion.
\end{proof}

\bigskip
\begin{rema} \label{rem-contmass} In the case of a mutation kernel  with finite second order moment as in \cite{CFM08}, the true Laplacian being a local operator, \eqref{recfn} may be replaced by 
\ben
\int_\R r(x)p(x)\sigma(x) f''_{n}(x)\nu^K_s(dx)
\leq \bar{r}\sup_{x\in\rit}\sigma(x)\, \sup_{l,x}| f''_l(x)|\, \langle \nu^K_s,f_{n-1}\rangle.
\een
The conclusion of Lemma \ref{controle-masse} may be derived by a similar argument. 
\end{rema}

Now, let $N(dt,dh)$ be a Poisson random measure on $\rit_+\times (-1,1)$ with  intensity $dt\frac{dh}{|h|^{1+\alpha}}$. The process $Z_t=\int_{(0,t]\times (-1,1)}h\Big(N(dt,dh)-\frac{dh}{|h|^{1+\alpha}}\Big)$ is a L\'evy process such that, for all $\, p\geq 1$, $\E(|Z_1|^p)<+\infty$. 
Let $X^x_t$ denote the solution of  the stochastic differential equation 
\begin{equation}
dX^x_t=\hat{\sigma}(X^x_{t^-}) dZ_t,\;X^x_0=x  
\label{edspetitsauts}
\end{equation}
which admits a unique solution when $\hat{\sigma}$ is Lipschitz continuous according to  Theorem 7, p.259, in \cite{Protter}. Let us denote by $P_t$ the associated semigroup defined  for all measurable and bounded  $f:\rit\rightarrow \rit$ by  the formula $P_tf(x)=\E(f(X^x_t))$. The infinitesimal generator of the process $X^x$ is 
\be
\label{defl}
Lf(x)=\int_{(-1,1)}\Big(f(x+\hat{\sigma}(x)h)-f(x)-f'(x)\hat{\sigma}(x)h\Big)\frac{dh}{|h|^{1+\alpha}}.
\ee

\begin{lem}\label{regukolmog}
Assume that $\hat{\sigma}$ is $C^2$ with a bounded first-order derivative, and a bounded, and locally Lipschitz second-order derivative. For $f\in C^2_b(\rit)$, $P_tf(x)$ belongs to $C^{1,2}_b([0,T]\times \rit)$, and solves the initial-value problem
\begin{equation*}
\begin{cases}
 \partial_t P_tf(x)=LP_tf(x),\;(t,x)\in[0,T]\times\rit\\
 P_0f(x)=f(x),\;x\in\rit
\end{cases}.
\end{equation*}
\end{lem}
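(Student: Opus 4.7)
My plan is to combine stochastic flow regularity for \eqref{edspetitsauts} with It\^o's formula for the jump process $X^x$. Under the hypotheses on $\hat\sigma$, classical results on stochastic flows of L\'evy-driven SDEs (e.g., Protter, Ch.~V) guarantee that $x\mapsto X^x_t$ is a.s.\ $C^2$, its derivative processes $Y^x_t=\partial_x X^x_t$ and $W^x_t=\partial_x^2 X^x_t$ solving the linearized SDEs
\[
dY^x_t=\hat\sigma'(X^x_{t^-})Y^x_{t^-}\,dZ_t,\ Y^x_0=1,\quad dW^x_t=\bigl(\hat\sigma''(X^x_{t^-})(Y^x_{t^-})^2+\hat\sigma'(X^x_{t^-})W^x_{t^-}\bigr)dZ_t,\ W^x_0=0.
\]
The Burkholder-Davis-Gundy inequality applied to the compensated jump integrals, the finiteness of $\E(|Z_1|^p)$ for every $p\geq 1$, and Gronwall's lemma give $\sup_{x\in\rit}\E(\sup_{t\leq T}(|Y^x_t|^p+|W^x_t|^p))<+\infty$ for every $p\geq 1$. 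These bounds together with $f\in C^2_b$ justify differentiation under the expectation, so
\[
\partial_x P_tf(x)=\E\bigl(f'(X^x_t)Y^x_t\bigr),\qquad \partial_x^2 P_tf(x)=\E\bigl(f''(X^x_t)(Y^x_t)^2+f'(X^x_t)W^x_t\bigr)
\]
are bounded on $[0,T]\times\rit$ and continuous in $x$ (by a.s.\ continuity of $x\mapsto(X^x_t,Y^x_t,W^x_t)$ and uniform integrability); in particular $P_tf(\cdot)\in C^2_b$ for each $t\in[0,T]$.

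Next I would apply It\^o's formula to $f(X^x_t)$: since $Z$ is a pure compensated Poisson integral with no continuous martingale part, this yields
\[
f(X^x_t)=f(x)+\int_0^t Lf(X^x_s)\,ds+M^x_t,\quad M^x_t=\int_0^t\!\!\int_{(-1,1)}\!\bigl[f(X^x_{s^-}+\hat\sigma(X^x_{s^-})h)-f(X^x_{s^-})\bigr](N-\nu)(ds,dh).
\]
The bound $|f(y+\hat\sigma(y)h)-f(y)|\leq\|f'\|_\infty|\hat\sigma(y)||h|$, the at-most-linear growth of $\hat\sigma$ (from bounded $\hat\sigma'$), and the $L^p$-estimates on $X^x$ make $M^x$ a true square-integrable martingale, so taking expectation gives the integral identity $P_tf(x)=f(x)+\int_0^t P_s(Lf)(x)\,ds$. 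Because $Lf\in C_b(\rit)$ (the Taylor compensation renders the integrand $O(h^2)$ near $0$), the right-hand side is $C^1$ in $t$ and jointly continuous in $(t,x)$, whence $\partial_t P_tf(x)=P_t(Lf)(x)$. To reach the stated backward form $\partial_t P_tf=L(P_tf)$, I apply the same identity to $g:=P_tf\in C^2_b$ and combine with the semigroup property $P_{t+h}f=P_h(P_tf)$ to get $h^{-1}(P_{t+h}f(x)-P_tf(x))=h^{-1}\int_0^h P_s(Lg)(x)\,ds\to L(P_tf)(x)$ as $h\to 0^+$; the left derivative is handled analogously for $t>0$. The initial condition $P_0f=f$ is tautological.

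I expect the principal obstacle to be the $C^2$-dependence of $X^x_t$ on $x$ for a jump-driven SDE with a possibly unbounded coefficient $\hat\sigma$: establishing this and the uniform $L^p$-moment bounds for $W^x$ requires careful stochastic flow theory, and the hypothesis that $\hat\sigma''$ is bounded and locally Lipschitz is precisely what closes the linear SDE for $W^x$ in each realisation and propagates its moments uniformly in $x$. A secondary subtlety is verifying that $M^x$ above is a genuine martingale rather than merely local, which follows from the square-integrability bounds just indicated together with $\alpha<2$.
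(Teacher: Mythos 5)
Your proposal is correct and follows essentially the same route as the paper: $C^2$ regularity of the stochastic flow $x\mapsto X^x_t$ with uniform $L^p$ bounds on the derivative processes (Gronwall plus moment estimates for integrals against $Z$) to justify differentiating under the expectation and conclude $P_tf\in C^2_b$, then It\^o's formula with the compensated Poisson integral as a true martingale to obtain the Kolmogorov equation. The only cosmetic difference is that the paper applies It\^o directly to $P_tf(X^x_u)$ via the Markov property to get $\partial_tP_tf=LP_tf$ in one step, whereas you first derive $\partial_tP_tf=P_t(Lf)$ and then pass to the backward form through the semigroup property; both hinge on the same verification that $(t,x)\mapsto LP_tf(x)$ is continuous and bounded.
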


\bigskip

\begin{proof}
%[of Lemma  \ref{regukolmog}]
By Theorem 40, p.317. in  \cite{Protter}, the mapping $x\mapsto X^x_t$ is twice continuously differentiable with first, and second order derivatives  solving, respectively,  the equations
$$
 d\partial_x X^x_t=\hat{\sigma}'(X^x_{t^-})\partial_x X^x_{t^-}dZ_t,\;\partial_x X^x_0=1, 
$$
and 
$$
 d\partial_{xx} X^x_t=\hat{\sigma}'(X^x_{t^-})\partial_{xx} X^x_{t^-}dZ_t+\hat{\sigma}''(X^x_{t^-})(\partial_x X^x_{t^-})^2dZ_t,\;\partial_{xx} X^x_0=0.
$$
For $q\geq 1$, since $\E(|Z_T|^q)<+\infty$, by Theorem 66,  p.346, in \cite{Protter}, there is a finite constant $K$ such that, for any predictable process $(H_t)_{t\leq T}$, and any  $ t\leq T,$\;\begin{equation}
\E\left(\sup_{s\leq t}\left|\int_0^sH_rdZ_r\right|^q\right)\leq K\int_0^tE(|H_s|^q)ds.\label{momz}
\end{equation}
This, combined with the regularity assumptions made on $\hat{\sigma}$, and Gronwall's Lemma, immediately implies that, for any $ q\geq 1,$
\begin{equation}
 \sup_{(t,x)\in[0,T]\times\rit}\E(|\partial_x X^x_t|^q+|\partial_{xx} X^x_t|^q)<+\infty.\label{momder}
\end{equation}
For $f\in C^2_b(\rit)$, one deduces that the mapping $x\mapsto P_tf(x)\in C^2_b(\rit)$,  with $\partial_xP_tf(x)=\E(f'(X^x_t)\partial_x X^x_t)$ and $\partial_{xx}P_tf(x)=\E(f''(X^x_t)(\partial_x X^x_t)^2+f'(X^x_t)\partial_{xx}X^x_t)$. Indeed, for instance, to differentiate for the second time under the expectation, we observe  that, as $y$ tends to $x$, the random variables $ ({f'(X^y_t)\partial_x X^y_t-f'(X^x_t)\partial_x X^x_t})/({y-x}) $ converge a.s. to $f''(X^x_t)(\partial_x X^x_t)^2+f'(X^x_t)\partial_{xx}X^x_t$, and  by \eqref{momder} are uniformly integrable since
\begin{align*}
 \left|\frac{f'(X^y_t)\partial_x X^y_t-f'(X^x_t)\partial_x X^x_t}{y-x}\right|&\leq \left|f'(X^y_t)\frac{\partial_x X^y_t-\partial_x X^x_t}{y-x}\right|+\left|\frac{f'(X^y_t)-f'(X^x_t)}{y-x}\right||\partial_x X^x_t|\\
&\leq \frac{\|f'\|_\infty}{y-x}\int_x^y|\partial_{xx} X^z_t|dz+|\partial_x X^x_t|\frac{\|f''\|_\infty}{y-x}\int_x^y |\partial_x X^z_t|dz.
\end{align*}
Moreover, since $f(X^x_t)$, $f'(X^x_t)\partial_x X^x_t$,  and $f''(X^x_t)(\partial_x X^x_t)^2+f'(X^x_t)\partial_{xx}X^x_t$, are continuous w.r.t. $x$, and right-continuous and quasi left-continuous  w.r.t. $t$, one deduces that the mapping $(t,x)\mapsto (P_tf(x),\partial_x P_tf(x),\partial_{xx}P_tf(x))$ is continuous and bounded on $[0,T]\times\rit$. With the upper-bound,
$$|P_tf(x+\hat{\sigma}(x)h)-P_tf(x)-\partial_x P_tf(x)\hat{\sigma}(x)h|\leq \frac{1}{2}\|\partial_{xx}P_tf\|_\infty\hat{\sigma}^2(x) h^2,$$ 
one concludes that the mapping $(t,x)\mapsto LP_tf(x)$ is continuous and bounded on $[0,T]\times\rit$.
For $u>0$, by the Markov property stated in Theorem 32, p.300, of  \cite{Protter}, $P_{t+u}f(x)=\E(P_tf(X^x_u))$. By It\^o's formula,
\begin{align*}
 P_tf(X^x_u)=&P_tf(x)+\int_0^u LP_tf(X^x_s)ds\\&+\int_{(0,u]\times (-1,1)}\left(P_tf(X^x_{s^-}+\hat{\sigma}(X^x_{s^-})h)-P_tf(X^x_{s^-})\right) (N(ds,dh)-\frac{dh}{|h|^{1+\alpha}}).
\end{align*}
Since the last integral is a martingale, it is centered and one obtains that $$\frac{P_{t+u}f(x)-P_tf(x)}{u}=\E\left(\frac{1}{u}\int_0^u LP_tf(X^x_s)ds\right).$$
By Lebesgue's Theorem, one deduces that $\lim_{u\to 0^+}\frac{P_{t+u}f(x)-P_tf(x)}{u}=LP_tf(x)$. Hence $(t,x)\mapsto P_tf(x)$ belongs to $C^{1,2}([0,T]\times \rit)$ and solves the initial-value problem
\begin{equation*}
\begin{cases}
 \partial_t P_tf(x)=LP_tf(x),\;(t,x)\in[0,T]\times\rit\\
 P_0f(x)=f(x),\;x\in\rit
\end{cases}.
\end{equation*}
\end{proof}

\medskip

 Unfortunately, in the general case needed in the proofs of our theorems,  $\hat{\sigma}$ is merely Lipschitz, and $P_{t-s}f(x)$ is not smooth enough in the spatial variable $x$. That is why, for $\varepsilon>0$, we set $\hat{\sigma}^\varepsilon(x)=\int_{\R}\hat{\sigma}(x-y)e^{-\frac{y^2}{2\varepsilon}} {dy}/{\sqrt{2\pi\varepsilon}}$, and define $X^{\varepsilon,x}_t$ as the solution of the SDE similar to \eqref{edspetitsauts}, but with $\hat{\sigma}$ replaced by $\hat{\sigma}^\varepsilon$. The generator of $X^{\varepsilon,x}_t$ is the operator $L^\epsilon$ defined like $L$, but with $\hat{\sigma}^\varepsilon$ replacing $\hat{\sigma}$. Finally,  we set $P^\varepsilon_t f(x)=\E(f(X^{\varepsilon,x}_t))$.  Now, we want to know what happens when   $\varepsilon$ tends to $0$.  
The next lemma  gives the H\"older's continuity of $\partial_xP_{t-s}^\varepsilon f$ and the order of convergence of $P^\varepsilon_tf(x)$ to $P_tf(x)$.

\begin{lem}\label{contderpeps}
  Assume that $\hat{\sigma}$ is Lipschitz, and $f\in C^2_b(\R)$. Then there exists a constant $  C>0$, such that for all $t\in[0,T],$ and $x,y\in\R $ such that 
  $|x-y|\leq \|\hat{\sigma}\|_\infty,$ 
$$
|\partial_xP_{t-s}^\varepsilon f(x)-\partial_xP_{t-s}^\varepsilon f(y)|\leq  \frac{C|x-y|^{\alpha/2}}{\varepsilon^{\alpha/4}}.$$
Moreover, 
\be
\label{cvsg}
\sup_{(t,x)\in[0,T]\times \R}|P_tf(x)-P^\varepsilon_tf(x)|\leq C\sqrt{\varepsilon}.
\ee
\end{lem}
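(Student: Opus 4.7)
The plan is to exploit the fact that $\hat{\sigma}^\varepsilon=\hat{\sigma}\ast G_\varepsilon$, where $G_\varepsilon$ is the centred Gaussian density of variance $\varepsilon$, is $C^\infty$ with bounded derivatives of every order, so that Lemma \ref{regukolmog} applies with $\hat{\sigma}^\varepsilon$ in place of $\hat{\sigma}$. This yields $P^\varepsilon_tf\in C^{1,2}_b([0,T]\times\R)$ together with the flow representations $\partial_xP^\varepsilon_tf(x)=\E(f'(X^{\varepsilon,x}_t)\partial_xX^{\varepsilon,x}_t)$ and $\partial_{xx}P^\varepsilon_tf(x)=\E(f''(X^{\varepsilon,x}_t)(\partial_xX^{\varepsilon,x}_t)^2+f'(X^{\varepsilon,x}_t)\partial_{xx}X^{\varepsilon,x}_t)$, and I will lean on three elementary estimates: $\|(\hat{\sigma}^\varepsilon)'\|_\infty\le \|\hat{\sigma}'\|_\infty$, $\|(\hat{\sigma}^\varepsilon)''\|_\infty\le C/\sqrt{\varepsilon}$ (by transferring the derivative onto the kernel and using $\|G'_\varepsilon\|_{L^1}=O(\varepsilon^{-1/2})$), and $\|\hat{\sigma}^\varepsilon-\hat{\sigma}\|_\infty\le C\sqrt{\varepsilon}$ (from the Lipschitz continuity of $\hat{\sigma}$).

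\textbf{H\"older estimate.} First I would apply \eqref{momz} and Gronwall's lemma to the linear SDE satisfied by $\partial_xX^{\varepsilon,x}$, whose driving coefficient $(\hat{\sigma}^\varepsilon)'(X^{\varepsilon,x})$ is bounded uniformly in $\varepsilon$; this yields, for every $q\ge 1$, the uniform moment bound $\sup_{\varepsilon,x\in\R,t\le T}\E|\partial_xX^{\varepsilon,x}_t|^q\le C_q$, hence $\|\partial_xP^\varepsilon_tf\|_\infty\le C\|f'\|_\infty$ independently of $\varepsilon$. Next, applying \eqref{momz} to the SDE for $\partial_{xx}X^{\varepsilon,x}$, whose inhomogeneous term is $(\hat{\sigma}^\varepsilon)''(X^{\varepsilon,x})(\partial_xX^{\varepsilon,x})^2$, and combining with $\|(\hat{\sigma}^\varepsilon)''\|_\infty\le C/\sqrt{\varepsilon}$, I would obtain $\sup_{x,t\le T}\E|\partial_{xx}X^{\varepsilon,x}_t|^q\le C_q/\varepsilon^{q/2}$ and therefore $\|\partial_{xx}P^\varepsilon_tf\|_\infty\le C/\sqrt{\varepsilon}$. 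The H\"older estimate then follows from an interpolation: when $|x-y|\le\sqrt{\varepsilon}$, the mean value theorem combined with the second-derivative bound gives a difference at most $C|x-y|/\sqrt{\varepsilon}\le C(|x-y|/\sqrt{\varepsilon})^{\alpha/2}$, whereas when $|x-y|>\sqrt{\varepsilon}$ the uniform first-derivative bound gives a difference bounded by $2C\le 2C(|x-y|/\sqrt{\varepsilon})^{\alpha/2}$; in both cases I arrive at $C|x-y|^{\alpha/2}/\varepsilon^{\alpha/4}$.

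\textbf{Semigroup convergence.} For \eqref{cvsg}, since $\|f'\|_\infty<\infty$ it suffices to control $\E|X^x_t-X^{\varepsilon,x}_t|$. The plan is to decompose
\ben
X^x_t-X^{\varepsilon,x}_t=\int_0^t\bigl[\hat{\sigma}(X^x_{s^-})-\hat{\sigma}(X^{\varepsilon,x}_{s^-})\bigr]dZ_s+\int_0^t\bigl[\hat{\sigma}(X^{\varepsilon,x}_{s^-})-\hat{\sigma}^\varepsilon(X^{\varepsilon,x}_{s^-})\bigr]dZ_s,
\een
apply \eqref{momz} with $q=2$ to each term, bound the first integrand by $\|\hat{\sigma}'\|_\infty|X^x_{s^-}-X^{\varepsilon,x}_{s^-}|$ through Lipschitzness of $\hat{\sigma}$ and the second by $\|\hat{\sigma}-\hat{\sigma}^\varepsilon\|_\infty\le C\sqrt{\varepsilon}$, and close the estimate with Gronwall's lemma. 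This yields $\sup_{x,t\le T}\E|X^x_t-X^{\varepsilon,x}_t|^2\le C\varepsilon$, whence \eqref{cvsg}.

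\textbf{Main obstacle.} The argument is essentially a sequence of standard computations; the only delicate ingredient is the $\varepsilon^{-1/2}$ blow-up of $\|(\hat{\sigma}^\varepsilon)''\|_\infty$ that arises when a derivative is transferred onto the Gaussian kernel. This blow-up is precisely what selects the crossover scale $|x-y|\sim\sqrt{\varepsilon}$ in the interpolation above and thereby produces the two exponents $\alpha/2$ and $\alpha/4$ appearing in the statement.
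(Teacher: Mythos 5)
Your proof is correct, and for the H\"older estimate it takes a genuinely different (and somewhat more elementary) route than the paper. The paper decomposes $\partial_xP^\varepsilon_tf(x)-\partial_xP^\varepsilon_tf(y)$ into $\E\bigl(f'(X^{x,\varepsilon}_t)(\partial_xX^{x,\varepsilon}_t-\partial_xX^{y,\varepsilon}_t)\bigr)$ plus an integral term of size $O(|x-y|)$, proves the flow estimate $\E\bigl(\sup_{s\leq T}(\partial_xX^{x,\varepsilon}_s-\partial_xX^{y,\varepsilon}_s)^2\bigr)\leq C(x-y)^2/\varepsilon$ from the SDE satisfied by the difference of the derivative flows, and then interpolates \emph{inside the expectation} via $|a-b|\leq|a-b|^{\alpha/2}(|a|+|b|)^{1-\alpha/2}$ and H\"older's inequality. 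You instead push the $\varepsilon^{-1/2}$ blow-up all the way to a deterministic bound $\|\partial_{xx}P^\varepsilon_tf\|_\infty\leq C/\sqrt{\varepsilon}$ (via the $L^q$ bound $\E|\partial_{xx}X^{\varepsilon,x}_t|^q\leq C_q\varepsilon^{-q/2}$, which is legitimate since Lemma \ref{regukolmog} applies to $\hat{\sigma}^\varepsilon$), keep the uniform bound $\|\partial_xP^\varepsilon_tf\|_\infty\leq C$, and conclude by a purely deterministic dichotomy at the crossover scale $|x-y|\sim\sqrt{\varepsilon}$. Both arguments rest on the same two inputs ($\|(\hat{\sigma}^\varepsilon)'\|_\infty$ uniformly bounded, $\|(\hat{\sigma}^\varepsilon)''\|_\infty\leq C/\sqrt{\varepsilon}$) and produce the same exponents $\alpha/2$ and $\alpha/4$; your version avoids the probabilistic interpolation entirely and, as a minor bonus, does not need the restriction $|x-y|\leq\|\hat{\sigma}\|_\infty$ (which the paper requires only to absorb its $O(|x-y|)$ remainder term into the right-hand side). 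For \eqref{cvsg} your argument is the one the paper sketches: the same two-term decomposition of $X^x_t-X^{\varepsilon,x}_t$, the bound $\|\hat{\sigma}-\hat{\sigma}^\varepsilon\|_\infty\leq C\sqrt{\varepsilon}$, \eqref{momz} with $q=2$ and Gronwall.
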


\begin{proof}
%[of Lemma \ref{contderpeps}]
One has
\begin{align*}
\partial_xP^\varepsilon_t f(x)-\partial_xP^\varepsilon_t f(y) =&\E\left(f'(X^{x,\varepsilon}_t)(\partial_xX^{x,\varepsilon}_t-\partial_xX^{y,\varepsilon}_t)\right)\\
& +\int_y^x\E\left(f''(X^{z,\varepsilon}_t)\partial_xX^{z,\varepsilon}_t\partial_xX^{y,\varepsilon}_t\right)dz.
\end{align*}
The absolute value of the second term of the r.h.s. is smaller than $C|y-x|$ since, for each $q \ge 1$, 
\begin{equation}  
\sup_{\varepsilon>0}\sup_{(t,x)\in[0,T]\times\rit}\E(|\partial_x X^{x,\varepsilon}_t|^q)<+\infty,
\label{momdereps}
\end{equation} 
because $\sup_{\varepsilon>0}\|{{\hat{\sigma}}^{\varepsilon}}\,'\|_\infty$ is not greater than the Lipschitz constant of $\hat{\sigma}$.
To deal with the first term, we remark that 
\begin{align*}
\partial_xX^{x,\varepsilon}_t-\partial_xX^{y,\varepsilon}_t=&\int_0^t{{\hat\sigma}^\varepsilon}\,'(X^{x,\varepsilon}_{s^-})(\partial_xX^{x,\varepsilon}_{s^-}-\partial_xX^{y,\varepsilon}_{s^-})dZ_s\\
&+\int_0^t({{\hat{\sigma}}^\varepsilon}\,'(X^{x,\varepsilon}_{s^-})-{{\hat{\sigma}}^\varepsilon}\,'(X^{y,\varepsilon}_{s^-}))\partial_xX^{y,\varepsilon}_{s^-}dZ_s
\end{align*}
Using \eqref{momz},  and the inequality $\|{{{\hat{\sigma}}^\varepsilon}}\,''\|_\infty\leq \frac{C}{\sqrt{\varepsilon}}$, one deduces that
\begin{align*}
  \E\left(\sup_{s\leq t}(\partial_xX^{x,\varepsilon}_s-\partial_xX^{y,\varepsilon}_s)^2\right)\leq &C\int_0^t\E((\partial_xX^{x,\varepsilon}_s-\partial_xX^{y,\varepsilon}_s)^2)ds\\&+\frac{C(x-y)}{\varepsilon}\int_0^t\int_y^x\E\left((\partial_xX^{z,\varepsilon}_{s})^2(\partial_xX^{y,\varepsilon}_{s})^2\right)dzds.
\end{align*}
In view of  \eqref{momdereps}, and Gronwall's Lemma,  one concludes that
\begin{align}
  \E\left(\sup_{s\leq T}(\partial_xX^{x,\varepsilon}_s-\partial_xX^{y,\varepsilon}_s)^2\right)\leq\frac{C(x-y)^2}{\varepsilon}\label{contecflot}.
\end{align}
Combining this bound with the inequality 
\begin{align*}
 |\E\left(f'(X^{x,\varepsilon}_t)(\partial_xX^{x,\varepsilon}_t\!-\partial_xX^{y,\varepsilon}_t)\right)|&\leq C\E\left(|\partial_xX^{x,\varepsilon}_t-\partial_xX^{y,\varepsilon}_t|^{\frac{\alpha}{2}} (|\partial_xX^{x,\varepsilon}_t|+|\partial_xX^{y,\varepsilon}_t|)^{1-\frac{\alpha}{2}}\right),
\end{align*}
and H\"older's inequality, one easily deduces the first statement of the Lemma. 
Since $\|\hat{\sigma}-\hat{\sigma}^\varepsilon\|_\infty\leq C\sqrt{\varepsilon}$, by a reasoning similar to the one employed  to prove \eqref{contecflot}, one easily checks that
$\E\left(\sup_{s\leq T}(X^{x}_s-X^{x,\varepsilon}_s)^2\right)\leq C\varepsilon$ and the second statement follows.

\end{proof}

%%%%%%%%%%%%%%%%%%%%%%%%%%%%%%%%%%%%%%%%%%%%%%%%%%%%%%%%%%%%%%%%%%%%%%%%

\section{Proofs of the Theorems }

\noindent 
\paragraph{Proof of Theorem~\ref{readif}}

\noindent  The proof of the theorem will be carried out in five steps. Let us fix $T>0$, and $\eta<1$.

\medskip
\noindent {\bf Step 1} \phantom{9} We first endow $M_F$ with the vague
topology. To show the tightness of the sequence of laws
$Q^K=\loi(\nu^K)$ in ${\cal
P}(\dit([0,T],(M_F,v)))$, where $M_F$ is endowed with the vague convergence topology, it suffices, following Roelly~\cite{Ro86}, to
show that for any continuous bounded function $f$ on $\rit$ the
sequence of laws of the processes $\langle \nu^K,f\rangle$ is
tight in $\dit([0,T], \rit)$. To this end  we use the Aldous
criterion~\cite{Al78} and the Rebolledo criterion
(see~\cite{JM86}) which require us  to show that
\begin{equation}
\label{tight} \sup_K
\E\big(\sup_{t\in [0,T]} | \langle \nu^K_t,f\rangle | \big)
<\infty,
\end{equation}
and   the laws, respectively,  of the predictable
quadratic variation of the martingale part, and
of the drift part of the semimartingales $\langle \nu^K,f\rangle$, are tight. 

Since $f$ is bounded,~(\ref{tight}) is a consequence
of~(\ref{me2}): let us thus consider a pair $(S,S')$ of stopping
times satisfying a.s. the inequality $0 \leq S \leq S' \leq S+\delta\leq T$.
Using~(\ref{quadra1}) and (\ref{me2}), for some positive real numbers $C$ and $
C'$, we get 
\begin{equation*}
\E\left(\langle M^{K,f}\rangle_{S'}- \langle
  M^{K,f}\rangle_S\right) \leq C \E\left( \int_S^{S+\delta} \left(
    \langle \nu^K_s,1\rangle+ \langle \nu^K_s,1\rangle^2
  \right)ds\right)\leq C' \delta.
\end{equation*}
In a similar way,  we show that the expectation of the finite variation part of
$\langle \nu^K_{S'},f\rangle - \langle \nu^K_S,f\rangle$ is
bounded by $C' \delta$.
  Hence,  the sequence $Q^K=\loi(\nu^K)$ is tight in ${\cal
P}(\dit([0,T],(M_F,v)))$.

\bigskip
\noindent {\bf Step 2} \phantom{9} Let us now denote by $Q$ the weak limit in ${\cal
P}(\dit([0,T],(M_F,v)))$
of a subsequence of $(Q^K)$ which we  also denote 
$(Q^K)$. We remark
that by construction,
 \begin{equation*}
\sup_{t\in [0,T]}\ \sup_{f \in L^\infty(\rit), || f
  ||_\infty\leq 1} | \langle \nu^K_t,f \rangle -
\langle \nu^K_{t^-},f \rangle | \leq 1/K.
\end{equation*}
Since, for each $f$ in a countable measure-determining set of continuous and compactly supported functions on $\rit$, the mapping $\nu\mapsto\sup_{t\leq T} |\langle \nu_t,f\rangle - \langle \nu_{t-},f\rangle |$ is continuous on $\mathbb{D}([0,T], (M_F,v))$, one deduces that $Q$ only charges the continuous processes  from $[0,T]$ into $(M_F,v)$. 
Let us now endow  $M_F$ with the weak convergence topology and check that $Q$ only charges the continuous processes from $[0,T]$ into $(M_F,w)$, and that the sequence $(Q^K)$ in ${\cal
P}(\dit([0,T],(M_F,w))$ converges weakly to $Q$.
 For this purpose, we need to control the behavior of the total mass of the measures. We will employ  the sequence $(f_n)$ of smooth functions introduced in  Lemma  \ref{controle-masse} which approximate the functions ${\bf 1}_{\{|x|\geq n\}}$.
For each  $n\in{\mathbb N}$, the continuous and compactly supported functions $(f_{n,l}\stackrel{\rm def}{=}f_n(1-f_l))_{l\in{\mathbb N}}$ increase to $f_n$, as $l\to\infty$. Continuity of the mapping $\nu\mapsto \sup_{t\leq T} \langle \nu_t,f_{n,l}\rangle$ on $\mathbb{D}([0,T], (M_F,v))$, and its    uniform integrability deduced from 
\eqref{me2}, imply the bound
$$
\E^Q\left(\sup_{t\leq T}\langle \nu_t,f_{n,l}\rangle\right)=\lim_{K\to\infty}\E\left(\sup_{t\leq T}\langle \nu^K_t,f_{n,l}\rangle\right)\leq\liminf_{K\to\infty}\E\left(\sup_{t\leq T}\langle \nu^K_t,f_{n}\rangle\right).
$$
Taking the limit, $l\to\infty$,  in the left-hand-side, in view of the monotone  convergence theorem and respectively, \eqref{me2} and Lemma \ref{controle-masse}, one concludes that   for $n=0$, 
\begin{equation}
 \E^Q\left(\sup_{t\leq T}\langle\nu_t,1\rangle\right)=\E^Q\left(\sup_{t\leq T}\langle\nu_t,f_0\rangle\right)<+\infty\label{contmass}
\end{equation} 
 and for general $n$,    \begin{equation}
 \lim_{n\to\infty}\E^Q\left(\sup_{t\leq T}\langle \nu_t,f_n\rangle\right)=0.\label{tightlim}
\end{equation}%%%%%%%%%
As a consequence one may extract a subsequence of the sequence $(\sup_{t\leq T}\langle \nu_t,f_n\rangle)_n$ that converges a.s. to $0$ under $Q$, and the set $(\nu_t)_{t\leq T}$ is tight  $Q$-a.s. Since 
$Q$ only charges the continuous processes  from $[0,T]$ into $(M_F,v)$, one deduces that $Q$ also only charges  the continuous processes  from $[0,T]$ into $(M_F,w)$.

Let $X$ denote a process with law $Q$. According to M\'el\'eard and Roelly~\cite{MR93}, to prove that the sequence $(Q^K)$ converges weakly to $Q$ in ${\cal
P}(\dit([0,T],(M_F,w))$, it is sufficient to check that the processes $(\langle\nu^K,1\rangle=(\langle \nu^K_t,1\rangle)_{t\leq T})_K$ converge in law to $\langle X,1\rangle\stackrel{\rm def}{=}(\langle X_t,1\rangle)_{t\leq T}$ in $\dit([0,T],\rit)$.
For  a Lipschitz continuous and bounded function $F$ from  $\mathbb{D}([0,T],  \mathbb{R})$ 
to $\mathbb{R}$, we have
\begin{align*}
\limsup_{K\to\infty}|\E(F(\langle \nu^K,1\rangle)&- F(\langle X,1\rangle)| \leq \limsup_{n\to\infty}\, \limsup_{K\to\infty}|\E(F(\langle \nu^K,1\rangle)- F(\langle \nu^K,1-f_n\rangle))| \\
&+
\limsup_{n\to\infty}\limsup_{K\to\infty}|E(F(\langle \nu^K,1-f_n\rangle)- F(\langle X,1-f_n\rangle))| \\
&+\limsup_{n\to\infty}|\E(F(\langle X,1-f_n\rangle)- F(\langle X,1\rangle))|. 
\end{align*}
Since $|F(\langle \nu,1-f_n\rangle)- F(\nu ,1\rangle)|\leq C\sup_{t\leq T}\langle \nu_t,f_n\rangle$, Lemma \ref{controle-masse} and  \eqref{tightlim} respectively imply that the first and the third terms in the r.h.s. are equal to $0$. The second term is $0$ in view of the  continuity of the mapping $\nu\mapsto \langle\nu,1-f_n\rangle$ in $\dit([0,T],(M_F,w))$.

\bigskip
\noindent
{\bf Step 3} \phantom{9} Recall that the time $T>0$ is  fixed, and $0<\eta<1$. Let us  check
that, almost surely, the process $X$ solves~(\ref{readif1}). By \eqref{contmass},   for each $T$, 
$\sup_{t\in
[0,T]} \langle X_t,1\rangle   $ is finite a.s. Now, we fix 
a function $f\in C^2_b(\rit)$, compactly supported if $\alpha\leq 1$ (in this case the extension of~(\ref{readif1})
to
any function $f$ in $C^2_b$ follows by Lebesgue's theorem), and a  $t\leq T$.\\
For $\nu \in \dit([0,T],(M_F,w))$, denote  
\begin{align}
\Psi^1_t(\nu)&= \langle\nu_t,f\rangle - \langle\nu_0,f\rangle -
\intot \int_{\rit}
(b(x,V*\nu_s(x))-d(x,U*\nu_s(x)))f(x)\nu_s(dx)ds,\notag \\
\Psi^2_t(\nu)&=- \int_0^t\int_{\rit}\tilde\sigma(x)D^\alpha f(x)\nu_s(dx)ds.
\end{align}
We must show that
\begin{equation}
\label{wwhtp}
\E^Q \left( |\Psi^1_t(X)+\Psi^2_t(X) | \right)=0.
\end{equation}

\noindent By~(\ref{mart1}), we know that, for each $K$,
\begin{equation*}
M^{K,f}_t=\Psi^1_t(\nu^K)+\Psi^{2,K}_t(\nu^K),
\end{equation*}
where
\begin{align*}
\Psi^{2,K}_t(\nu^K)
=&-\int_0^t\int_{\rit}p(x)(K^{\eta}r(x)+b(x,V*\nu^K_s(x)))\\
&\hskip 1cm \times \left( \int_{\rit}
f(x+h)M_K(x,h)dh-f(x)\right) \nu^K_s(dx)ds.
\end{align*}
Moreover,~(\ref{me2}) implies that for each $K$,
\begin{equation}
\label{cqmke}
\E \left( | M^{K,f}_t |^2 \right) = \E \left( \langle
    M^{K,f}\rangle_t \right)\leq \frac{C_{f}K^{\eta}}{K}E\left(
  \intot  \left\{\langle \nu^K_s,1\rangle
    +\langle \nu^K_s,1\rangle^2\right\}ds \right) \leq
\frac{C_{f,T}K^{\eta}}{K},
\end{equation}
which goes to $0$ as $K$ tends to infinity, since $0<\eta<1$. Since, by Proposition \ref{restlapfrac}, there exists a deterministic sequence $(\varepsilon_K)$, converging to $0$ as $K\to \infty$,  such that 
$$
|\Psi^{2,K}_t(\nu^K)-\Psi^{2}_t(\nu^K)|\leq \varepsilon_K \int_0^t\langle\nu^K_s,1\rangle ds,
$$
one deduces that
\begin{equation*}
\lim_K E(|\Psi^1_t(\nu^K)+\Psi^{2}_t(\nu^K)|)=0.
\end{equation*}
Since $X\in C([0,T],(M_F,w))$ and $f\in C^2_b(\rit)$, 
due to the continuity of the parameters, the functions
$\Psi^1_t$, and $\Psi^2_t$ are a.s.~continuous at $X$. Furthermore,
for any $ \nu \in \dit([0,T],M_F)$,
\begin{equation}
|\Psi^1_t(\nu)+\Psi^2_t(\nu)| \leq C_{f} \left(\langle\nu_t,1\rangle+\langle\nu_0,1\rangle+\int_0^t\langle \nu_s,1\rangle+\langle \nu_s,1\rangle^2 ds\right).
\end{equation}
Hence, in view of (\ref{me2}),   the sequence
$(\Psi^1_t(\nu^K)+\Psi^2_t(\nu^K))_K$ is uniformly integrable, and
thus
\begin{eqnarray}
\label{cqv1}
\lim_K \E\left(|\Psi^1_t(\nu^K)+\Psi^2_t(\nu^K)|\right)
=\E\left(|\Psi^1_t(X)+\Psi^2_t(X)|\right)=0, 
\end{eqnarray}
which concludes the proof of  the first part of Theorem  \ref{readif}.

\bigskip
\noindent
{\bf Step 4} \phantom{9} In this step we will prove part {\it  (ii)} of Theorem  \ref{readif} which asserts  uniqueness  of the solution of (\ref{readif1}) under the additional assumption  that $\hat{\sigma}$ is Lipschitz.    
According to \eqref{chgtvar}, one has
\begin{align*}
\tilde{\sigma}(x) D^\alpha f(x)
=Lf(x)+\int_{\rit\setminus(-1,1)}\left(f(x+\hat{\sigma}(x)h)-f(x)\right)\frac{dh}{|h|^{1+\alpha}} , 
\end{align*}
where $L$ has been defined in \eqref{defl}.
 It is easy to prove that if $\xi$ is a
solution of~(\ref{readif1}) satisfying $\sup_{t\in [0,T]}
\langle\xi_t,1 \rangle <\infty$, then, for each test function
$\psi_t(x)=\psi(t,x) \in C^{1,2}_b(\rit_+\times \rit)$, one gets
\begin{align}
\label{readif2}
\langle\xi_t,\psi_t\rangle&=\langle \xi_0,\psi_0\rangle+\int_0^t\int_{\rit}(\partial_s \psi(s,x)+\tilde{\sigma}(x)D^\alpha\psi_s(x))\xi_s(dx)ds\notag\\ &\quad +
\int_0^t\int_{\rit}
(b(x,V*\xi_s(x))-d(x,U*\xi_s(x)))\psi(s,x)\xi_s(dx)ds \notag 
\\
&=\langle \xi_0,\psi_0\rangle+\int_0^t\int_{\rit}(\partial_s \psi(s,x)+L\psi_s(x))\xi_s(dx)ds
\notag \\ &
\quad +\quad \int_0^t\int_{\rit}
\bigg((b(x,V*\xi_s(x))-d(x,U*\xi_s(x)))\psi(s,x)\notag \\
&  \quad \phantom{+\int_0^t\int_{\rit}
\bigg(}+\int_{\rit\setminus(-1,1)}\left(\psi(s,x+\hat{\sigma}(x)h)-\psi(s,x)\right)\frac{dh}{|h|^{1+\alpha}}\bigg)\xi_s(dx)ds.
\end{align}

Let $t\in[0,T]$ and $f\in C^2_b(\rit)$. 
We would like to choose $\psi(s,x)=P_{t-s}f(x)$, where $P_tf(x)=E(f(X^x_t))$, and $X^x_t$ is the unique solution of the stochastic differential equation \eqref{edspetitsauts},   so that the second term in the right-hand-side above vanishes. 
In view of Lemma  \ref{regukolmog}, this  is immediately possible if $\hat{\sigma}$ is assumed to be $C^2$, with a bounded first order derivative, and a bounded and locally Lipschitz second order derivative.  
 
  Unfortunately, when  $\hat{\sigma}$ is merely Lipschitz continuous, $P_{t-s}f(x)$ is not smooth enough in the spatial variable $x$. That is why, for $\varepsilon>0$, we set $\hat{\sigma}^\varepsilon(x)=\int_{\R}\hat{\sigma}(x-y)e^{-\frac{y^2}{2\varepsilon}}\frac{dy}{\sqrt{2\pi\varepsilon}}$ and define $X^{\varepsilon,x}_t$ as the solution to the SDE similar to \eqref{edspetitsauts} but with $\hat{\sigma}$ replaced by $\hat{\sigma}^\varepsilon$. The generator of $X^{\varepsilon,x}_t$ is the operator $L^\epsilon$ defined like $L$ but with $\hat{\sigma}^\varepsilon$ replacing $\hat{\sigma}$ and we set $P^\varepsilon_t f(x)=\E(f(X^{\varepsilon,x}_t))$. According to   Lemma \ref{regukolmog},  for the choice $\psi(s,x)=P^\varepsilon_{t-s}f(x)$, equation \eqref{readif2} takes the form
\begin{align}
\label{readif3bis}
 \langle
\xi_t,f\rangle=&\langle \xi_0,P^\varepsilon_tf\rangle+\int_0^t\int_{\rit}(L-L^\varepsilon)P_{t-s}^\varepsilon f(x)\xi_s(dx)ds\notag\\ 
 &+\int_0^t \int_{\rit}
\bigg((b(x,V*\xi_s(x))-d(x,U*\xi_s(x)))P^\varepsilon_{t-s}f(x)\notag\\
&+\int_{\rit\setminus(-1,1)}\left(P^\varepsilon_{t-s}f(x+\hat{\sigma}(x)h)-P^\varepsilon_{t-s}f(x)\right)\frac{dh}{|h|^{1+\alpha}}\bigg)\xi_s(dx)ds.
\end{align}
 One now whishes to make $\varepsilon$ tend to $0$.  Since, 
\begin{align*}
  (L-L^\varepsilon)P_{t-s}^\varepsilon f(x)=\int_{(-1,1)}\int_{\hat{\sigma}^\varepsilon(x)h}^{\hat{\sigma}(x)h}\left(\partial_xP_{t-s}^\varepsilon f(x+y)-\partial_xP_{t-s}^\varepsilon f(x)\right)dy\frac{dh}{|h|^{1+\alpha}}, 
\end{align*}
  Lemma \ref{contderpeps}, and the estimation $\|\hat{\sigma}-\hat{\sigma}^\varepsilon\|_\infty\leq C\sqrt{\varepsilon}$, imply
  that
$$|(L-L^\varepsilon)P_{t-s}^\varepsilon f(x)|\leq C\frac{|\hat{\sigma}(x)-\hat{\sigma}^\varepsilon(x)|}{\varepsilon^{\alpha/4}}\int_{(-1,1)}\frac{dh}{|h|^{\alpha/2}}\leq C\varepsilon^{(2-\alpha)/4}.$$

\noindent Letting $\varepsilon\to 0$ in \eqref{readif3bis}, one concludes using \eqref{cvsg} that\begin{align}
\label{readif3} \langle
\xi_t,f\rangle=&\langle \xi_0,P_tf\rangle+ \int_0^t \int_{\rit}
\bigg((b(x,V*\xi_s(x))-d(x,U*\xi_s(x)))P_{t-s}f(x)\notag\\
&+\int_{\rit\setminus(-1,1)}\left(P_{t-s}f(x+\hat{\sigma}(x)h)-P_{t-s}f(x)\right)\frac{dh}{|h|^{1+\alpha}}\bigg)\xi_s(dx)ds.
\end{align}
% \begin{equation}\label{kolmog}
%    \begin{cases}
%       \partial_s \psi(s,x)+L\psi(s,x)=0,\;(s,x)\in[0,t]\times\rit\\
% \psi(t,x)=f(x),\;x\in\rit
%    \end{cases}
% \end{equation}

%\noindent The proof of the following lemma is postponed to the end of the section.

\bigskip

\bigskip
\noindent
Next, we consider the
variation norm defined for $\mu_1, \mu_2\in M_F$ as follows:
\begin{equation}
|| \mu_1 - \mu_2 || = \sup_{f \in
  L^\infty({\rit}), \; | | f ||_\infty \leq 1}
|\left<\mu_1-\mu_2,f\right>|.
\end{equation}
By the Jordan-Hahn decomposition of the measure $\mu_1 - \mu_2$, there exists a Borel subset $A$ of $\rit$ such that $||\mu_1 - \mu_2||=\left<\mu_1 - \mu_2,1_A-1_{\rit\setminus A}\right>$. In view of the  inner regularity of the measure $\mu_1 + \mu_2$ there exists a closed set $B\subset A$ such that $\left<\mu_1 + \mu_2,A\setminus B\right>$ is arbitrarily small. The function $f_k(x)=(1-kd(x,B))\vee(-1)$ is Lipschitz continuous and tends to $1_B-1_{\rit\setminus B}$, as $k$ tends to $\infty$. By Lebesgue's theorem, $\left<\mu_1 - \mu_2,f_k(x)\right>$ tends to $\left<\mu_1 - \mu_2,1_B-1_{\rit\setminus B}\right>$, as $k\to\infty$. Since 
$$
|||\mu_1 - \mu_2||-\left<\mu_1 - \mu_2,1_B-1_{\rit\setminus B}\right>|=2|\left<\mu_1 - \mu_2,A\setminus B\right>|\leq 2\left<\mu_1 + \mu_2,A\setminus B\right>,
$$ for $B$ and $k$ well chosen,  $\left<\mu_1 - \mu_2,f_k\right>$ is arbitrarily close to $||\mu_1 - \mu_2||$. Now, utilizing the convolution, $f_k$ may be approximated by a sequence of $C^2_b$ functions globally bounded by $1$ which converge uniformly on compact sets. Thus one deduces that $|| \mu_1 - \mu_2 || = \sup_{f \in
  C^2_b({\rit}), \; | | f ||_\infty \leq 1}
|\left<\mu_1-\mu_2,f\right>|.$

\medskip
Now, we are ready  to  prove the uniqueness of a solution of~(\ref{readif3}).
For two solutions $(\xi_t)_{t\geq 0}$, and $(\bar
\xi_t)_{t\geq 0}$, of~(\ref{readif3}), such that  $\sup_{t\in [0,T]}
\left< \xi_t+\bar \xi_t,1 \right> =A_T <+\infty$,  and $f\in C^2_b(\rit)$ satisfying the condition  $|| f ||_\infty \leq 1$, one has
\begin{align}
\label{lip}
| \left<\xi_t-\bar \xi_t,f\right> |\leq&
\intot \bigg| \int_{\rit} [\xi_s(dx) - \bar \xi_s(dx)]
  \bigg((b(x,V*\xi_s(x))-d(x,U*\xi_s(x)))P_{t-s}f(x)\notag\\
  &\phantom{\leq
\intot \bigg| \int_{\rit} [\xi_s(dx) - \bar \xi_s(dx)]
  \bigg( }
  \hskip -2cm+\int_{|h|\geq 1}(P_{t-s}f(x+\hat{\sigma}(x)h)-P_{t-s}f(x))\frac{dh}{|h|^{1+\alpha}}\bigg)\bigg| ds
  \notag \\ 
   & +  \intot \left|\int_{\rit} \bar
  \xi_s(dx)(b(x,V*\xi_s(x))-b(x,V*\bar{\xi}_s(x)))\right|
  ds \notag \\ 
   &+   \intot \left|\int_{\rit} \bar
  \xi_s(dx)(d(x,U*\xi_s(x))-d(x,U*\bar{\xi}_s(x)))P_{t-s}f(x)\right|
  ds.
\end{align}
Since $|| f ||_\infty \leq 1$, then $|| P_{t-s}f ||_\infty \leq 1$
and, for all $x\in \rit$,
$$
 \left|(b(x,V*\xi_s(x))-d(x,U*\xi_s(x)))P_{t-s}f(x)\right|\leq
\bar{b}+\bar{d}(1+\bar{U}A_T),
$$
and
$$ \left| \int_{|h|\geq 1}(P_{t-s}f(x+\hat{\sigma}(x)h)-P_{t-s}f(x))\frac{dh}{|h|^{1+\alpha}}\right| \leq \frac{4}{\alpha}
.$$
Moreover, $b$ and $d$ are Lipschitz continuous in their second
variable with respective constants $K_b$, and $K_d$. Thus we obtain
from~(\ref{lip}) that
\begin{equation}
| \left<\xi_t-\bar \xi_t,f\right> | \leq
\left[\bar{b}+\bar{d}(1+\bar{U}A_T)+\frac{4}{\alpha}+K_b A_T\bar{V}+K_d A_T\bar{U}
\right] \intot  || \xi_s - \bar\xi_s ||ds.
\label{majodualvar}
\end{equation}
Taking the supremum over $C^2_b$ functions $f$ bounded by $1$, one obtains
$$ ||\xi_t - \bar\xi_t ||\leq
\left[\bar{b}+\bar{d}(1+\bar{U}A_T)+\frac{4}{\alpha}+K_b A_T\bar{V}+K_d A_T\bar{U}
\right] \intot  || \xi_s - \bar\xi_s ||ds, $$
and uniqueness follows by an application of Gronwall's Lemma.

\bigskip
\noindent
{\bf Step 5} \phantom{9} In the final step we shall prove the existence of a density claimed in Part {\it (iii)}. 
The image of the Poisson random measure $N$ on $\rit_+\times  [-1, 1]$ with intensity $dt  \frac{dh}{ |h|^{1+\alpha}}$,  by the mapping $(t,h)\mapsto (t,\frac{{\rm sgn}(h)}{\alpha |h|^\alpha})$, is a Poisson random measure on $\rit_+\times \rit\setminus [-\frac{1}{\alpha},\frac{1}{\alpha}]$ with intensity $dtdz$ since, for $z=\frac{{\rm sgn}(h)}{\alpha |h|^\alpha}$, one has $dz=\frac{dh}{|h|^{1+\alpha}}$. Let us denote by $\tilde{\mu}(dt,dz)$ the associated compensated measure. The stochastic differential equation \eqref{edspetitsauts} can now be written in the form,
$$
X^x_t=x+\int_{(0,t]\times \rit\setminus [-\frac{1}{\alpha},\frac{1}{\alpha}]} c(X^x_{s^-},z) \tilde{\mu}(ds,dz),
$$ 
for $c(x,z)= \hat{\sigma}(x)\times\frac{{\rm sgn}(z)}{(\alpha |z|)^{1\over \alpha}}$. When the strictly positive function $\hat{\sigma}\in C^3_b$, i.e., it is   bounded together with its derivatives up to order 3,   one may apply Theorem 2.14,  p.11,  \cite{bichtgravjac},  to deduce that, for $t\in (0,T]$, $X^x_t$ admits a density $p_t(x,y)$ with respect to the Lebesgue measure on the real line. With \eqref{readif3}, one deduces that, for $t>0$, $\xi_t$ admits a density equal to 
\begin{align*}
y\mapsto &\int_{\rit}p_t(x,y)\xi_0(dx)+\int_0^t\int_{\rit}(b(x,V*\xi_s(x))-d(x,U*\xi_s(x)))p_{t-s}(x,y)\xi_s(dx)ds
\\&+\int_0^t\int_{\rit}\int_{|h|\geq 1}(p_{t-s}(x+\hat{\sigma}(x)h,y)- p_{t-s}(x,y))\frac{dh}{|h|^{1+\alpha}}\xi_s(dx)ds.
\end{align*}
This completes the proof of Theorem \ref{readif}.
{\hfill $\square$ \vspace{0.25cm}}
% Our aim is to prove that under smooth conditions, and there is a unique solution of this equation and that for each $t>0$, $\xi_t$ has a density with respect to Lebesgue measure. To this aim, we will associate the underlying process. 

% \begin{lem} For $c(x,z) = {\rm sg}(z)\left(\frac{\tilde\sigma(x)}{\alpha |z|}\right)^{1\over \alpha}$ and $\tilde{D}^\alpha f(x)=\int_{|z|>1}(f(x+c(x,z)) - f(x) - c(x,z) f'(x)) dz$, we have
% \be
% \tilde\sigma(x)D^\alpha f(x)= \int_{|z|\leq 1}(f(x+c(x,z)) - f(x)) dz+\tilde{D}^\alpha f(x).
%       \ee
%       \end{lem}

%       \begin{proof} 
% Since $D^\alpha f(x)=\int_0^{+\infty}(f(x+h)+f(x-h)-2f(x))\frac{dh}{h^{1+\alpha}}$, the change of variables $h=\left(\frac{\tilde\sigma(x)}{\alpha z}\right)^{1\over \alpha}$ which is such that
% $\tilde\sigma(x)dh=-\left(\frac{\tilde\sigma(x)}{\alpha z}\right)^{1 +{1\over \alpha}}dz=-h^{1+\alpha}dz$ yields the desired equality.

% % Let us firstly assume

%       $$
% \int_H^{+\infty} {1\over |h|^{1+\alpha} } dh = {1\over \alpha}
% H^{-\alpha}
% $$
%   Montrons que le changement $z\to sg(z) \alpha
% |z|^{-{1\over \alpha}}.$ \begin{eqnarray} \int_\mathbb{R}
% f({sg(z)\over \alpha} |z|^{-\alpha}) dz &=&\int_0^{+\infty}
% f({sg(z)\over \alpha} |z|^{-\alpha}) dz + \int_{-\infty}^0
% f({sg(z)\over \alpha} |z|^{-\alpha}) dz\\
% &=& \alpha^{\alpha+1} \int_0^{+\infty} (f(x) + f(-x)) {dx\over
% x^{1+\alpha}}.
% \end{eqnarray}

% Avec $c(x)$ en plus, on doit prendre ${c(x)\over \alpha}^{1\over
% \alpha} sg(z) |z|^{-{1\over \alpha}}$, qui sera le $c(x,z)$ dans
% les notations de BGJ.

\bigskip
\noindent
Let us now turn to the proof of  Theorem~\ref{readifstoch}.  In this case the allometric exponent $\eta=1$.

\paragraph{Proof of Theorem~\ref{readifstoch}}
We will use a  method similar to  the one employed in the proof of Theorem \ref{readif}.  
Actually, Steps~1, 2 and~3 are completely analogous and we omit them. Thus, we only have to prove the uniqueness (in law)
of the solution of  the martingale
problem~(\ref{eca})--(\ref{qvmf}), and the fact that  any
accumulation point of the sequence of laws of $\nu^K$ is a solution
of ~(\ref{eca})--(\ref{qvmf}). 

\medskip
 
{\it  Uniqueness.}    The uniqueness in the general case can be deduced from the special case  when $b=d=0$ by using the Dawson-Girsanov transform for measure-valued processes  (cf. Evans and
Perkins~\cite{EP94} (Theorem~2.3)).  Indeed,  
\begin{equation*}
\E\left(\intot \intrd
  [b(x,V*X_s(x))-d(x,U*X_s(x))]^2X_s(dx) ds\right)<+\infty,
\end{equation*}
which allows us to use this transform. 

  In the case $b=d=0$ the proof of  uniqueness can be  adapted from Fitzsimmons \cite{F92}, Corollary 2.23. This proof is based on the identification of the Laplace transform of the process, using the extension of the  martingale problem  \eqref{dfmf} to functions  $\psi(s,x)=P_{t-s}f(x)$ with  bounded functions $f$ (Fitzsimmons \cite{F92} Proposition 2.13). For $\psi(s,x)$ being $C^1$ in time, and $C^2$ in $x$,  \eqref{dfmf} extends with an additional term $\partial_{s}\psi$ appearing in the drift part.  As we have seen above, the function $P_{t-s}f(x)$ is not smooth enough. Thus we firstly apply \eqref{dfmf} to $\psi(s,x)=P^\varepsilon_{t-s}f(x)$, as already done in the proof of Theorem \ref{readif}, thanks to Lemma \ref{regukolmog}, and we make $\varepsilon$ tend to $0$ as in   Lemma \ref{contderpeps}. 

\medskip
{\it Identification of the limit.} \phantom{9} Fnally, let us  identify the limit of the sequence of laws of $\nu^K$ as  a solution
of ~(\ref{eca})--(\ref{qvmf}). 
 Write
$Q^K=\loi(\nu^K)$,  and denote by $Q$ a limiting value in ${\cal P}(\mathbb{D}([0,T],(M_F,w))$ of a subsequence (denoted also  $Q^K$), and by $X=(X_t)_{t\geq 0}$ a process with law $Q$.
Because of Step~4, $X$ belongs a.s.~to $C([0,T],(M_F,w))$. We have to
show that $X$ satisfies the conditions~(\ref{eca}), (\ref{dfmf})
and~(\ref{qvmf}). First,  note that~(\ref{eca}) is straightforward
from~(\ref{eq:X4}) and Lemma \ref{mep}. 

Next we show that for any function $f$ in
$C^2_b(\mathbb{R})$, compactly supported when $\alpha\leq 1$, the process $\bar M^f_t$ defined
by~(\ref{dfmf}) is a martingale (the extension to every function
in $C^2_b$ is not hard). Indeed,  consider $0\leq s_1\leq...\leq
s_n<s<t$, and  continuous bounded maps $\phi_1,...\phi_n$ on
$M_F$. Our goal is to prove that, if the function $\Psi$ from
$\mathbb{D}([0,T],M_F)$ into $\mathbb{R}$ is defined by the expression
\begin{align}
&\Psi(\nu) = \prod_{k=1}^n\phi_k(\nu_{s_k})  \Big\{
\langle \nu_t,f\rangle -\langle \nu_s,f\rangle
\notag \\ & -\! \int_s^t\! \intrd\! \bigg(\tilde{\sigma}(x)D^\alpha f(x)+ f(x)
\left[b(x,V*\nu_u(x))-d(x,U*\nu_u(x)) \right]\bigg)\nu_u(dx)du
\Big\},
\end{align}
then
\begin{equation}
\label{cqfd44}
\E\left( \Psi(X) \right)=0.
\end{equation}
It follows from~(\ref{mart1}) that
\begin{eqnarray}
\label{262626}
0=\E \left( \prod_{k=1}^n\phi_k(\nu^K_{s_k})\left\{ M^{K,f}_t -
  M^{K,f}_s \right\}\right)=\E \left( \Psi(\nu^K) \right) - A_K,
\end{eqnarray}
where 
\begin{multline*}
A_K=\E \Big(\prod_{k=1}^n\phi_k(\nu^K_{s_k})\int_s^t \intrd
  \Big\{ p(x) b(x,V*\nu^K_u(x))\Big[\intrd  (f(x+h)- f(x))M_K(x,dh)\Big]\\
  +p(x) r(x) K \int_{\rit}  (f(x+h) - f(x))M_K(x,dh)-\tilde{\sigma}(x)D^\alpha f(x)
\Big\}\nu^K_u(dx)du\Big).
\end{multline*}
In view of  Proposition \ref{restlapfrac},  $A_K$ tends to zero,  as $K$
grows to infinity. Applying   Lemma \ref{mep}, for $p=3$, we see that  the sequence
$(|\Psi(\nu^K)|)_K$ is uniformly integrable, so that
\begin{equation}
\label{333}
\lim_K \E\left(|\Psi (\nu^K)|\right) = \E_Q\left(|\Psi(X)| \right),
\end{equation}
since the function $\psi$ is continuous a.s. at $X$. 
Collecting the previous results allows us to conclude
that~(\ref{cqfd44}) holds true, and thus $\bar M^f$ is a martingale.

Finally, we have to show that the bracket of $\bar M^f$ is of the form
$$
\langle \bar M^f\rangle_t=2\int_0^t \int_{\rit}r(x) f^2(x) X_s(dx)ds.
$$ 
To this end, we first check that
\begin{align}
\label{lfalqoc}
\bar N^{f}_t &= \langle X_t,f \rangle^2 - \langle X_0,f \rangle^2
-\intot  \int_{{\mathbb{R}}} 2r(x)f^2(x) X_s(dx)ds \notag \\
&- 2\intot \langle X_s,f \rangle  \int_{\rit}  f(x)
\left[b(x,V*X_s(x))-d(x,U*X_s(x)) \right]X_s(dx) ds \notag \\
&- 2\intot   \langle X_s,f \rangle  \int_{\rit}
\tilde{\sigma}(x)D^\alpha f(x) X_s(dx) ds
\end{align}
is a martingale. This can be done exactly as for $\bar M^f_t$,
using the semimartingale
decomposition of $\langle \nu^K_t,f \rangle^2$, given by
(\ref{eq:mart-gal}) with $\phi(\nu)=\langle\nu,f\rangle^2$,  and applying Lemma \ref{mep} with $p=4$.
On the other hand, It\^o's formula implies that
\begin{multline*}
\langle X_t,f \rangle^2 - \langle X_0,f \rangle^2 - \langle \bar
M^f\rangle_t
- 2\intot  \langle X_s,f \rangle \int_{\rit}
\tilde{\sigma}(x)D^\alpha f(x) X_s(dx)ds \\
- 2\intot  \langle X_s,f \rangle  \int_{\rit}  f(x)
\big[b(x,V*X_s(x))-d(x,U*X_s(x)) \big]X_s(dx) ds
\end{multline*}
is a martingale. Comparing this formula with~(\ref{lfalqoc}), we
obtain (\ref{qvmf}).\hfill$\Box$
\bigskip

%%%%%%%%%%%%%%%%%%%%%%%%%%%%%%%%%%%%%%%%%%%%%%%%%%%%%%%%%%%%%%%%

\section{Concluding remarks} 

We have developed  models for population dynamics in the context of evolutionary ecology permitting heavy tailed distribution of mutations. Depending on the value of the allometric exponent $\eta$, the continuum (macro) limits of the individual (micro) dynamics turned out to be described by deterministic solutions of fractional nonlocal  reaction-diffusion equations driven by fractional Laplacians (the case $0<\eta<1$), or measure-valued nonlinear stochastic superprocesses driven by L\'evy- stable processes.
These limiting models can now be used as approximate objects  for numerical simulation of  evolutionary Darwinian dynamics in presence of  non-negligible large mutations. Of course, estimators of the relevant parameters of the phenomena under study have to be obtained first.

It the future it  may also be worthwhile, from the perspective of practical applications,  to elucidate  the situation where the mutations   have distributions    intermediate between the heavy tailed distributions studied in this paper and the Gaussian distributions considered in \cite{CFM06}. Such distributions,  which can display a  multiscaling behavior,  $\alpha$-stable type  for small mutations, and exponential for large mutations, have been recently suggested in the physical and economics  literature and studied under different names  such as  truncated L\'evy, and tempered L\'evy distributions, see, e.g.,  \cite{ms94},  \cite{cgmy02}, \cite{r07},  \cite{tw06},   and \cite{cr07}.

%%%%%%%%%%%%%%%%%%%%%%%%%%%%%%%%%%%%%%%%%%%%%%%%%%%%%%%%%%%%%%%%

\end{document}